\newtheorem{thm}{Theorem}
\newtheorem{lem}[thm]{Lemma}
\newtheorem{prop}[thm]{Proposition}
\theoremstyle{definition}
\newtheorem{dfn}[thm]{Definition}
\newtheorem{rem}[thm]{Remark}
\newcommand{\mc}[1]{\mathcal{#1}}
\newcommand{\mb}[1]{\mathbb{#1}}
\newcommand{\sub}{\subset}
\newcommand{\sm}{\setminus}
\newcommand{\es}{\emptyset}
\newcommand{\ov}{\overline}
\newcommand{\eps}{\varepsilon}
\newcommand{\dD}{\delta}
\newcommand{\DD}{\Delta}
\newcommand{\OO}{\Omega}
\title{Distinct degrees in induced subgraphs}
\author{Matthew Jenssen\thanks{Mathematical Institute, University of Oxford, Oxford, UK. E-mail: 
\texttt{jenssen@maths.ox.ac.uk.}} \and Peter Keevash\thanks{Mathematical Institute, University of Oxford, Oxford, UK. E-mail: 
\texttt{keevash@maths.ox.ac.uk.}} \and Eoin Long\thanks{School of Mathematics, University of Birmingham, Birmingham, UK. E-mail: 
\texttt{e.long@bham.ac.uk.}} \and Liana Yepremyan\thanks{Mathematical Institute, University of Oxford, Oxford, UK. E-mail: 
\texttt{yepremyan@maths.ox.ac.uk.}\newline\hspace*{1.5em} 
Research supported in part by ERC Consolidator Grant 647678.}}
\begin{document}

 \maketitle

\abstract{
An important theme of recent research in Ramsey theory
has been establishing pseudorandomness properties of Ramsey graphs.
An $N$-vertex graph is called {\em $C$-Ramsey}
if it has no homogeneous set of size $C\log N$.
A theorem of Bukh and Sudakov, 
solving a conjecture of  Erd\H{o}s, Faudree and S\'os,
shows that any $C$-Ramsey $N$-vertex graph
contains an induced subgraph with $\Omega_C(N^{1/2})$ distinct degrees.
We improve this to $\Omega_C(N^{2/3})$,
which is tight up to the constant factor.

We also show that any $N$-vertex graph with $N > (k-1)(n-1)$ 
and $n\geq n_0(k) = \Omega (k^9)$ either contains a homogeneous set 
of order $n$ or an induced subgraph with $k$ distinct degrees. 
The lower bound on $N$ here is sharp, as shown by an appropriate Tur\'an graph, and confirms a conjecture of Narayanan and Tomon.
} 

\section{Introduction}

A major open problem in Ramsey theory is the construction
of explicit graphs that are approximately tight for Ramsey's theorem;
all known constructions involve some randomness, 
which motivates a substantial literature establishing
that Ramsey graphs have certain pseudorandomness properties.
Given a graph $G$, we call $U \sub V(G)$ {\em homogeneous}
if the induced subgraph $G[U]$ is complete or empty.
Ramsey's theorem states that $\hom (G) \to \infty $ 
as $N := |V(G)| \to \infty$. In a more quantitative form, 
we have $\tfrac {1}{2} \log _2N \le \hom (G) \le 2 \log _2N$,
where the lower bound is due to Erd\H{o}s and Szekeres \cite{ESR}
and the upper bound to Erd\H{o}s \cite{E} (the birth
of the Probabilistic Method in Combinatorics).
It is remarkable that in the 70+ years since these results
there have only been improvements to the lower order terms
(see the survey \cite{CFS}). Furthermore, there is no known
explicit construction of an $N$-vertex graph $G$ with
$\hom(G) = O(\log N)$, despite intense interest in this question
and the related notions of randomness extraction/dispersion in
Computer Science; the best known explicit construction 
due to Li \cite{Li} gives 
$\hom(G) = (\log N)^{O(\log\log\log N)}$.

Motivated by both the difficulty in providing explicit constructions
and the challenge in improving the bounds for the Ramsey problem,
an important theme of recent research in Ramsey theory
has been establishing properties of Ramsey graphs 
supporting the intuition that they should be `random-like'.
This indirect study has been very fruitful, and it is now known 
that $N$-vertex Ramsey graphs display similar behaviour  
to the Erd\H{o}s-Renyi random graph $G_{N,1/2}$ in many respects:\ 
the edge density by Erd\H{o}s and Szemer\'edi \cite{ES}; 
universality of small induced subgraphs by Pr\"omel and R\"odl \cite{PR}; 
the number of non-isomorphic induced subgraphs by Shelah \cite{Sh}; 
the sizes and orders of induced subgraphs 
by Kwan and Sudakov \cite{KS, KS2} 
and Narayanan, Sahasrabudhe and Tomon \cite{NST}.

Here we consider a problem of
Erd\H{o}s, Faudree and S\'os \cite{EFS}
concerning induced subgraphs with many distinct degrees.
Given a graph $G$, we let 
	\begin{equation*}
		f(G) := 	\max \big \{k\in {\mathbb N}: G \mbox{ has an induced subgraph with } k \mbox { distinct degrees}\big \}.
	\end{equation*}
Bukh and Sudakov \cite{BS} showed that any $N$-vertex graph $G$ 
with $\hom (G) \leq C \log N$ has $f(G) = \Omega _C(N^{1/2})$,
thus confirming a conjecture in \cite{EFS}
motivated by the observation that
$f\big ( G_{N,1/2} \big ) = \Omega (N^{1/2})$
with high probability (whp);
they noted however the lack of a corresponding upper bound,
and showed that whp $f\big ( G_{N,1/2} \big ) = O(N^{2/3})$. 
An unpublished result of Conlon, Morris, Samotij and Saxton \cite{CMSS} 
shows that whp $f\big ( G_{N,1/2} \big ) = \Omega(N^{2/3})$,
so this in fact gives the correct order.
Our first theorem establishes the same lower bound 
for Ramsey graphs, which is therefore tight up to the constant factor. 

\begin{thm}
	\label{theorem: D_D_Ramsey}
	Let $G$ be an $N$-vertex $C$-Ramsey graph. Then 
	$f(G) = \Omega _C \big ( N^{2/3} \big )$.
\end{thm}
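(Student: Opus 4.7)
The plan is to improve Bukh and Sudakov's $\Omega(N^{1/2})$ lower bound to $\Omega(N^{2/3})$ by combining their diversification framework with a greedy adjustment step that breaks the natural $\sqrt{N}$-barrier imposed by a pairwise local central limit theorem (LCLT) argument. The starting point is a \emph{diversification lemma} extracted from the $C$-Ramsey hypothesis in the style of \cite{BS}: by iteratively grouping together vertices with near-identical neighborhoods and noting that a long such process would yield a homogeneous set of size $\omega(\log N)$, one obtains a subset $S\sub V(G)$ of size $\Omega_C(N)$ together with a disjoint reference set $R\sub V(G)\sm S$ of size $\Omega_C(N)$ such that $\bsize{(N_G(u)\triangle N_G(v))\cap R}\geq c_0 N$ for every pair of distinct $u,v\in S$, where $c_0=c_0(C)>0$.

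Set $m=\bfl{c_1 N^{2/3}}$ for a small constant $c_1>0$, pick targets $v_1,\dots,v_m\in S$, and initialize $U=\{v_1,\dots,v_m\}\cup W_0$, where $W_0\sub R$ is random with each vertex included independently with probability $1/2$. For each pair $v_i\ne v_j$ the induced degree difference splits as $d_U(v_i)-d_U(v_j)=c_{ij}+Y_{ij}$, with $c_{ij}$ deterministic (coming from the target set) and $Y_{ij}$ a sum of independent $\pm 1$ variables indexed by $(N_G(v_i)\triangle N_G(v_j))\cap R$ with variance $\Omega(N)$. The LCLT gives $\Pr[Y_{ij}=-c_{ij}]=O(N^{-1/2})$, and a direct Cauchy--Schwarz on the number of colliding pairs yields only $\Omega(N^{1/2})$ distinct degrees in expectation, recovering the Bukh--Sudakov bound but no better. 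To pass this barrier, I would iteratively refine $W_0\ra W$ by toggling the membership of a single vertex $w\in R$ at a time: for each collision $d_U(v_i)=d_U(v_j)$, the diversification lemma provides $\Omega(N)$ candidate vertices $w\in (N_G(v_i)\triangle N_G(v_j))\cap R$ such that flipping $w$ shifts $d_U(v_i)-d_U(v_j)$ by $\pm 2$ and thus resolves the collision. By a careful choice of $w$ (for instance via a defect-counting or rebalancing argument), the expected number of \emph{new} collisions introduced per toggle can be kept to $o(1)$, so $\Omega(m)$ collisions can be resolved in total, leaving $\Omega(m)=\Omega(N^{2/3})$ distinct degrees in $G[U]$.

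The main obstacle is the quantitative analysis of the greedy step: one must argue that the collateral damage of each adjustment on the other $m-2$ target degrees is small on average. This requires going beyond the pointwise LCLT and using a finer, interval-based anticoncentration estimate for sums of independent $\pm 1$ variables, exploiting the fact that the $\Omega(N)$ available toggle-vertices form a well-distributed set within $R$. The exponent $2/3$ emerges from the unique balance point at which the random fluctuation scale $\sqrt{N}$, the number $\binom{m}{2}$ of pairs to control, and the $\Omega(N)$ degrees of freedom for adjustment all match; any larger $m$ would overwhelm the adjustment budget, while any smaller $m$ would leave the available randomness under-used.
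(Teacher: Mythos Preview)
Your diversification setup and your diagnosis that the naive $p=\tfrac12$ random subset runs into a $\sqrt{N}$-barrier are both correct, but the greedy toggling step is where the whole difficulty lies, and as stated it is not a proof. Toggling a single $w\in R$ is a \emph{global} move: it shifts $d_U(v_i)-d_U(v_j)$ by $\pm 2$ for every pair with $w\in N_G(v_i)\triangle N_G(v_j)$, which is typically a constant fraction of all $\binom{m}{2}=\Theta(N^{4/3})$ pairs. In a generic configuration there are $\Theta(N^{5/6})$ pairs at each small even difference, so a single toggle destroys and creates comparable numbers of collisions; nothing you have written forces the net change to be negative, let alone to create only $o(1)$ new collisions. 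There is also a structural obstruction your outline does not address: the Ramsey hypothesis does not prevent all $v_i$ from having degree into $R$ within $O(\sqrt N)$ of one another, in which case after sampling $W_0$ the $m$ realised degrees lie in an interval of length $O(\sqrt N)\ll m$. By convexity the collision count is then $\Omega(m^2/\sqrt N)=\Omega(N^{5/6})$ \emph{for any} set whose degrees lie in such an interval, and each toggle enlarges the interval by at most $2$, so no local repair scheme can bring the count down to $O(m)$ without first spreading the expected degrees over a range of width $\Omega(m)$ --- which is precisely the step you have not supplied.

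The paper bypasses post-hoc repair entirely by building this spread into the sampling measure in advance. Instead of $p\equiv\tfrac12$, it constructs a biased vector $\mathbf{p}\in[0.1,0.9]^R$ for which the \emph{expected} degrees $\mathbb{E}\,d_{G(\mathbf{p})}(v_i)$ are already pairwise separated by at least $1$ on a subset $U'$ of size $\Omega(m)$. The construction is itself random: one takes $\mathbf{p}=\tfrac12\mathbf{1}+\sum_{u}\tfrac{m_u}{N}\mathbf{u}$ with independent uniform integers $m_u\in[-|U|,|U|]$, where $\mathbf{u}\in\{\pm1\}^R$ is the signed neighbourhood vector of $u$. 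Diversity gives $(\mathbf{u}-\mathbf{u}')\cdot\mathbf{u}\ge 2\delta N$, so varying $m_u$ alone moves the expected degree gap by $\Omega(\delta)$ per unit, and an anticoncentration-plus-Tur\'an argument yields the separated $U'$. Once expectations are $1$-separated, at most $O(m\sqrt N)$ pairs have expected degrees within $2\sqrt N$ of each other, each such pair collides with probability $O(N^{-1/2})$, and the expected number of collisions is $O(m)$ rather than $O(m^2/\sqrt N)$; Tur\'an then finishes. This pre-separation of expectations via a second layer of randomisation is the idea your proposal is missing.
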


Moreover, we establish this lower bound on $f(G)$ using only
the combinatorially simpler `diversity' property (see \cite{Sh,BS})
that many vertices have dissimilar neighbourhoods: 
we say $U \sub V(G)$ is {\em $\dD$-diverse}
if $|N_G(u) \triangle N_G(u')| \geq \delta |V(G)|$
for any distinct $u,u'$ in $U$.

\begin{thm}
\label{theorem: distinct_degrees_Ramsey}
Given $\delta > 0$ there is $c>0$ 
such that any $N$-vertex graph $G$ 
with 
a $\dD$-diverse set of size $N^{2/3}$ has 
an induced subgraph with at least $cN^{2/3}$ distinct degrees.
\end{thm}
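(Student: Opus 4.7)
Let $U$ be the $\dD$-diverse set with $|U| = N^{2/3}$ given by hypothesis. My plan is to exhibit a random subset $W \sub V(G)$ such that, with positive probability, $\Omega(N^{2/3})$ vertices of $U$ attain pairwise distinct degrees in the induced subgraph $G[W]$; since $|U| = N^{2/3}$, this amounts to forcing a constant fraction of $U$ to have unique degrees in $G[W]$.

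I would sample $W$ by including each vertex of $V(G)$ independently with probability $1/2$. For distinct $u, u' \in U$, write $d_W(u) - d_W(u') = \sum_{v \in N(u) \triangle N(u')} \eps_v X_v$, where $X_v := \mathbf{1}[v \in W]$ and $\eps_v = \pm 1$ according as $v \in N(u) \sm N(u')$ or $v \in N(u') \sm N(u)$. Diversity supplies $\geq \dD N$ such independent nonzero terms, and the classical Erd\H{o}s--Ko--Rado / Littlewood--Offord anti-concentration bound yields
\[
\Pr[d_W(u) = d_W(u')] \leq C/\sqrt{\dD N}.
\]
The naive second moment method stalls here at the Bukh--Sudakov bound $\Omega(\sqrt{N})$: one has $|U|^2 \cdot O(1/\sqrt{\dD N}) = O(N^{5/6})$ colliding pairs in expectation, which by Cauchy--Schwarz gives $|U|^2 / (O(N^{5/6}) + |U|) = \Omega(\sqrt{N})$ distinct values of $d_W(u)$ over $u \in U$.

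To push the bound from $\sqrt{N}$ up to $N^{2/3}$, I would prove and apply a \emph{joint} anti-concentration inequality of the form
\[
\Pr[d_W(u_1) = \cdots = d_W(u_r)] \leq (C/\sqrt{\dD N})^{r-1}
\]
for every pairwise $\dD$-diverse tuple $u_1, \ldots, u_r \in U$. Such a bound controls all multiplicity moments $\mathbb{E}\bigl[\sum_k \binom{n_k}{r}\bigr]$ with $n_k := |\{u \in U : d_W(u) = k\}|$, and by choosing $r$ appropriately (say $r = \Theta(\log N)$) one can force $\max_k n_k = O(1)$ with positive probability, leaving $|\{d_W(u) : u \in U\}| \geq |U|/O(1) = \Omega(N^{2/3})$ as required.

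The principal obstacle is precisely the joint anti-concentration above. The difficulty is that $d_W(u_1), \ldots, d_W(u_r)$ all depend on the same random set $W$ and are therefore correlated, so one cannot simply multiply the pairwise bound. My natural attack is to condition successively on $d_W(u_1), d_W(u_2), \ldots$, at each stage identifying a portion of the symmetric differences $N(u_{i+1}) \triangle N(u_j)$ (for $j \leq i$) that is independent of the past conditioning and still large enough to apply anti-concentration to $d_W(u_{i+1})$ conditionally. The delicate point is that each conditioning consumes part of the per-pair diversity budget of $\dD N$; ensuring that the budget is not exhausted before stage $r$ may require passing to a subset $U' \sub U$ of size $\Omega(|U|)$ exhibiting a stronger "independence-of-symmetric-differences" structure, or replacing the iterative conditioning by a Fourier/character-theoretic decoupling argument that exploits the product structure of the uniform measure on $\{0,1\}^{V(G)}$.
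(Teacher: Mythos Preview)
Your approach has a genuine gap that cannot be repaired along the lines you suggest. The obstruction is this: $\dD$-diversity says nothing about the \emph{degrees} $d_G(u)$ of the vertices $u\in U$; a regular graph can perfectly well contain a $\dD$-diverse set. Suppose then that $G$ is $d$-regular. Under your uniform sample $W$ (each vertex kept with probability $1/2$), every $d_W(u)$ has mean $d/2$ and variance at most $N/4$, so by a standard concentration bound all $N^{2/3}$ values $d_W(u)$, $u\in U$, lie with high probability in a window of length $O(\sqrt{N\log N})$. There are therefore at most $O(\sqrt{N\log N})\ll N^{2/3}$ distinct values available, regardless of any anti-concentration input; the collisions you are trying to rule out are forced by pigeonhole. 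This also explains why your moment calculation cannot close: even granting the joint estimate $\Pr[d_W(u_1)=\dots=d_W(u_r)]\le (C/\sqrt{\dD N})^{r-1}$, one has
\[
\mathbb{E}\Bigl[\sum_k \binom{n_k}{r}\Bigr]\le \binom{|U|}{r}\Bigl(\frac{C}{\sqrt{\dD N}}\Bigr)^{r-1}
\approx |U|\cdot\frac{1}{r!}\Bigl(\frac{C\,N^{1/6}}{\sqrt{\dD}}\Bigr)^{r-1},
\]
and making this $o(1)$ requires $r\gtrsim N^{1/6}$, not $r=\Theta(\log N)$; the resulting bound $\max_k n_k\le r$ then yields only $|U|/r=\Theta(N^{1/2})$ distinct degrees, recovering Bukh--Sudakov and no more.

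The missing idea, and the one the paper supplies, is to abandon the uniform $p=1/2$ and instead \emph{choose the sampling probabilities as a function of the neighbourhood structure of $U$}, precisely in order to separate the expected degrees. Concretely, with ${\bf u}\in\{-1,1\}^V$ the signed indicator of $N_G(u)$ and $m_u$ uniform in $[-|U|,|U|]$ independently, set
\[
{\bf p}=\tfrac12\,{\bf 1}+\sum_{u\in U}\frac{m_u}{N}\,{\bf u}\in[0.1,0.9]^V
\]
(after a mild truncation). Diversity now enters twice: it forces $({\bf u}-{\bf u}')\cdot{\bf u}\ge 2\dD N$, which makes the expected-degree difference $\mathbb{E}\,d_{G({\bf p})}(u)-\mathbb{E}\,d_{G({\bf p})}(u')$ move by at least $\dD$ per unit change in $m_u$, so only $O_\dD(1/|U|)$ of the choices of $m_u$ give a near-collision. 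This yields $U'\subset U$ of size $\Omega_\dD(|U|)$ with expected degrees at least $1$ apart. Once the expectations are $1$-separated, only $O_\dD(|U'|\sqrt{N})$ pairs in $U'$ have expectations within $2\sqrt N$ of each other, and the \emph{pairwise} Erd\H{o}s--Littlewood--Offord bound (no joint version needed) then leaves $O_\dD(|U'|)$ actual degree collisions, which Tur\'an's theorem cleans up. The whole point is that the bias in ${\bf p}$ manufactures the spread in expected degrees that uniform sampling cannot provide.
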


Theorem \ref{theorem: distinct_degrees_Ramsey} 
implies Theorem \ref{theorem: D_D_Ramsey}
as the hypotheses of the former
follow from those of the latter
by results of Kwan and Sudakov \cite{KS2}
(see subsection \ref{subsec:pfs}).

It is also natural to investigate the relationship 
between $\hom (G)$ and $f(G)$ in more generality.
Narayanan and Tomon \cite{NT} showed 
for any $k \in {\mathbb N}$, $\eps >0$
and $N \geq N_0(k,\eps )$ that 
any $N$-vertex graph $G$ has $f(G) \ge k$
or $\hom (G) \geq N / (k-1+\eps)$.
They conjectured that the optimal relationship
between $\hom (G)$ and $f(G)$ when $|V(G)| \gg f(G)$
should be given by the $(k -1)$-partite Tur\'an graph 
on $N = (k -1)(n-1)$ vertices, which has 
$f(G)=k-1$ and $\hom(G)=n-1 = N/(k-1)$.
We confirm this conjecture, thus obtaining an exact result,
and moreover we only require a lower bound on $n$ 
that is polynomial in $k$
(in \cite{NT} an exponential lower bound is assumed).

\begin{thm}
	\label{theorem: distinct_degrees_large_homogen}
	Suppose $G$ is an $N$-vertex graph with 
	$N> (n-1)(k-1)$, where $n = \Omega ({k^{9}})$. 
	Then $f(G) \geq k$ or $\hom (G) \geq n$.
\end{thm}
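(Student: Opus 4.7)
The plan is to assume for contradiction that $G$ has $N > (n-1)(k-1)$ vertices, $\hom(G) < n$, and $f(G) < k$, and to show that these hypotheses force $G$ to coincide essentially with the $(k-1)$-partite Tur\'an graph on at most $(n-1)(k-1)$ vertices, contradicting the lower bound on $N$.

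Since $f(G) \le k-1$, the graph $G$ itself has at most $k-1$ distinct vertex degrees; partition $V(G) = D_1 \cup \cdots \cup D_m$ into its degree classes (so $m \le k-1$). The heart of the argument is a structural claim: each $D_i$ is nearly homogeneous in $G$, and each bipartite graph $G[D_i, D_j]$ is nearly complete or nearly empty, with at most $\mathrm{poly}(k)$ exceptional vertices. The rationale is that any large-scale irregularity inside $G[D_i]$---say, a vertex $v \in D_i$ with both many neighbours and many non-neighbours inside $D_i$---should force extra distinct degrees in a suitable induced subgraph via a diversity-type argument. I would try to leverage Theorem~\ref{theorem: distinct_degrees_Ramsey}: even a modest irregularity inside $D_i$ should yield a $\dD$-diverse subset of size $\ge \mathrm{poly}(k)$, from which the $\Omega(N^{2/3})$ bound on distinct degrees easily exceeds $k$.

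Once the structural claim is in hand, $G$ is a blow-up of some template graph on $m \le k-1$ vertices, modulo at most $\mathrm{poly}(k)$ exceptional vertices. Each blow-up class is homogeneous in $G$ and so has size $< n$, giving $N \le (n-1)(k-1) + \mathrm{poly}(k)$. To upgrade this to the strict bound $N \le (n-1)(k-1)$, a careful case analysis is needed: the exceptional vertices must either enlarge a homogeneous class to size $\ge n$, or create an extra distinct degree, in either case producing the desired contradiction.

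The main obstacle is achieving the polynomial dependence $n = \Omega(k^9)$ rather than the exponential bound previously known from Narayanan--Tomon. Their argument apparently loses a factor at each of roughly $k$ iterations; the improvement here likely rests on a more efficient single-shot stability step, converting $\mathrm{poly}(k)$-sized defects directly into $\ge k$ distinct degrees. The $\Omega(N^{2/3})$ strength of Theorem~\ref{theorem: distinct_degrees_Ramsey}---far more distinct degrees than we actually need---is the right hammer for this: one can afford significant losses in passing from local non-homogeneity to a diverse set, and still comfortably exceed $k$ distinct degrees, with total cost amounting to a fixed polynomial in $k$.
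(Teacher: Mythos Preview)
Your proposal has a fundamental error at the first step. Partitioning $V(G)$ into degree classes does not recover the near-blowup structure you want: in the extremal example itself---the balanced $(k-1)$-partite Tur\'an graph---every vertex has the same degree, so your partition has a single class $D_1 = V(G)$, and $G[D_1]=G$ is the full Tur\'an graph, which is certainly not ``nearly homogeneous''. Equal degree simply does not imply similar neighbourhoods, so the structural claim is false as stated. The paper's partition (Lemma~\ref{lem: coarse structure}) is based instead on neighbourhood similarity: one takes a maximal set of vertices with pairwise symmetric difference at least $2^{10}k^2$, shows it has size at most $4k$ via an anticoncentration argument that directly exploits $f(G)<k$, and assigns every remaining vertex to a nearest element. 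This partition does detect the Tur\'an structure.

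Your appeal to Theorem~\ref{theorem: distinct_degrees_Ramsey} is also misplaced: that theorem needs a $\dD$-diverse set of size $N^{2/3}$ with $\dD$ an absolute constant, whereas a $\mathrm{poly}(k)$-sized defect inside a part of size $\Theta(n)$ yields diversity parameter of order $\mathrm{poly}(k)/N = o(1)$, and a diverse set of size merely $\mathrm{poly}(k)$, not $N^{2/3}$. The paper does not use Theorem~\ref{theorem: distinct_degrees_Ramsey} here at all. Instead it introduces \emph{$k$-control graphs} (Definition~\ref{def:control}): structured induced subgraphs in which a random restriction of designated blocks $C_i$ forces $k$ distinct degrees with positive probability (Lemma~\ref{lem: control sets give distinct degrees}). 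After writing $G$ as a $\DD$-perturbation of a non-degenerate $\mc{P}$-blowup (Lemma~\ref{lem: coarse robust structure}), an algorithm extracts from each part $W_i$ a $k_i$-control graph via a Tur\'an-type greedy construction that uses $\alpha(G)<n$ (Lemma~\ref{lem: special case}), and glues them together. The exact bound then comes from the superadditivity of $\phi(m)=\lceil m/(n-1)\rceil$, which guarantees $\sum_i k_i \ge \phi(N)=k$ after careful bookkeeping of the exceptional set $R$; this counting, not a post-hoc case analysis on exceptional vertices, is what delivers the sharp inequality $N\le (n-1)(k-1)$.
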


We prove Theorems \ref{theorem: D_D_Ramsey}
and \ref{theorem: distinct_degrees_Ramsey} 
in the next section and Theorem
\ref{theorem: distinct_degrees_large_homogen}
in the following section.
The final section contains some concluding remarks.

\section{Distinct degrees in Ramsey graphs}

Our proof that any sufficiently diverse graph
contains an induced subgraph with many distinct degrees
naturally splits into two pieces. 

In the first subsection we give a new perspective:
we reduce the problem to a continuous relaxation
(in a similar spirit to \cite[Section 3]{KL}).
We show that it is sufficient to define
a probability distribution on the vertex set,
with respect to which a random induced subgraph
has a large set of vertices whose 
expected degrees are well-separated.

While this change of perspective creates a larger
and more flexible solution space, the existence
of the required distribution is still quite subtle.
In the second subsection we show its existence
via an additional randomisation, in which 
the probabilities themselves are randomly generated
according a distribution that takes into account
the neighbourhood structure of our graph.

In the final subsection of this section 
we combine the two above ingredients 
to prove our result on diverse graphs
(Theorem \ref{theorem: distinct_degrees_Ramsey})
and deduce (via results of Kwan and Sudakov)
our result on Ramsey graphs
(Theorem \ref{theorem: D_D_Ramsey}).

\subsection{A continuous relaxation}

Let $G$ be a graph with vertex partition $V(G) = U \cup V$.
Given  ${\bf p} = (p_v)_{v\in V} \in [0,1]^V$, 
let $G({\bf p}) = G[U \cup W]$ denote 
the random induced subgraph 
where $W$ contains each $v \in V$
independently with probability $p_v$. 
The main result of this subsection is the following lemma,
showing that separation of expected degrees in $G({\bf p})$
guarantees an induced subgraph with distinct degrees.

\begin{lem}
	\label{lem: switch to probabilistic world}
Given $\delta >0$ there is $c > 0$ so that the following holds. Let $G$ be a graph with vertex partition 
$V(G) = U \cup V$ where $|V|=N$. 
Suppose also that $U' \subset U$ 
and ${\bf p} \in [0.1,0.9]^V$ such that
any distinct $u,u'$ in $U'$ satisfy
\[ \big | {\mathbb E} \big ( d_{G({\bf p})}(u) \big ) 
	- {\mathbb E} \big ( d_{G({\bf p})}(u') \big ) 
	\big | \geq \delta 
	\ \text{ and } \
	\big | \big (N_G(u) \triangle N_G(u') \big ) \cap V\big | 
	\geq \delta N. \] 
Then there is $W \subset V$ so that $G[U \cup W]$ 
has at least $c |U'|$ distinct degrees.
\end{lem}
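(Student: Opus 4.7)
The plan is to pick $W \subseteq V$ at random, including each $v \in V$ independently with probability $p_v$, and to analyse $\mathbb{E}[|\{X_u : u \in U'\}|]$ where $X_u := d_{G[U \cup W]}(u)$. Set $n := |U'|$ and $\mu_u := \mathbb{E}[X_u] = |N_G(u) \cap U| + \sum_{v \in N_G(u) \cap V} p_v$, so the first hypothesis gives $|\mu_u - \mu_{u'}| \geq \delta$ for distinct $u, u' \in U'$. Let $\mathrm{coll}$ be the number of unordered pairs $\{u,u'\} \subseteq U'$ with $X_u = X_{u'}$. A Cauchy--Schwarz estimate applied to the multiplicities $m_k := |\{u \in U' : X_u = k\}|$ gives the deterministic bound $|\{X_u : u \in U'\}| \geq n^2/(n + 2\,\mathrm{coll})$; since the right-hand side is convex in $\mathrm{coll}$, Jensen yields $\mathbb{E}[|\{X_u\}|] \geq n^2/(n + 2\mathbb{E}[\mathrm{coll}])$. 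Hence it suffices to show $\mathbb{E}[\mathrm{coll}] = O_\delta(n)$, and the probabilistic method will then produce a specific $W$ with $|\{X_u\}| \geq c|U'|$ for $c = c(\delta)$.

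For each pair in $U'$ write $X_u - X_{u'} = c_{uu'} + Z_{uu'}$, where $c_{uu'}$ is deterministic and $Z_{uu'}$ is a signed sum of independent Bernoullis $\mathbb{1}_{v \in W}$ (with $p_v \in [0.1, 0.9]$) indexed by $(N_G(u) \triangle N_G(u')) \cap V$, a set of size $\geq \delta N$. Thus $0.09\,\delta N \leq \mathrm{Var}(Z_{uu'}) \leq N/4$. The key input is a pointwise local central limit theorem with Gaussian tail for such Bernoulli sums (standard, e.g.\ in Petrov's book), yielding
\[
  \mathbb{P}(X_u = X_{u'}) = \mathbb{P}(Z_{uu'} = -c_{uu'}) \leq \frac{C}{\sqrt{\delta N}}\,\exp\!\left(-\frac{(\mu_u - \mu_{u'})^2}{\kappa N}\right)
\]
for absolute constants $C, \kappa > 0$. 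Summing over $u' \neq u$, and using that $\{\mu_{u'} : u' \in U'\}$ is a $\delta$-separated subset of $\mathbb{R}$, a Riemann-sum estimate gives
\[
  \sum_{u' \neq u} \exp\!\left(-\frac{(\mu_{u'} - \mu_u)^2}{\kappa N}\right) \leq \frac{2}{\delta} \int_{\mathbb{R}} e^{-s^2/(\kappa N)}\,ds = O(\sqrt{N}/\delta).
\]
Multiplying, $\mathbb{E}[\mathrm{coll}_u] = O(1/\delta^{3/2})$ per vertex, so $\mathbb{E}[\mathrm{coll}] = O_\delta(n)$ and $c = \Omega(\delta^{3/2})$ as required.

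The main obstacle I anticipate is securing the pointwise LCLT with Gaussian-decay tail, with constants depending only on the window $[0.1,0.9]$ on the $p_v$ (uniformly in $N$ and in the shift $\mu_u - \mu_{u'}$). A weaker argument that replaces the Gaussian factor by only the uniform LCLT bound $O(1/\sqrt{\delta N})$ combined with Hoeffding tails introduces a $\sqrt{\log N}$ factor into the per-vertex collision count, which would prevent $c$ from being independent of $N$; the pointwise Gaussian-decay form of the LCLT is precisely what closes this gap.
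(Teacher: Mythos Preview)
Your approach is correct, but it differs from the paper's in an instructive way. Both arguments sample $W$ according to $\mathbf p$ and aim to show that the expected number of equal-degree pairs in $U'$ is $O_\delta(|U'|)$; the divergence is in how this bound is obtained and how it is converted into many distinct degrees.

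The paper does \emph{not} invoke a pointwise local limit theorem with Gaussian tail. Instead it first passes to the ``balanced'' set $B\subset U'$ of vertices whose realised degree lies within $\sqrt N$ of its expectation; Chebyshev and Markov give $|B|\ge |U'|/2$ with probability at least $1/2$. The point is that if $u,u'\in B$ and $d_H(u)=d_H(u')$ then necessarily $|\mu_u-\mu_{u'}|\le 2\sqrt N$, so colliding pairs inside $B$ lie in the set $P$ of pairs with close expectations, and the $\delta$-separation immediately gives $|P|\le 2\delta^{-1}|U'|\sqrt N$. Now the \emph{uniform} Erd\H os--Littlewood--Offord bound $\mathbb P(d_H(u)=d_H(u'))=O((\delta N)^{-1/2})$ suffices: the expected number of collisions inside $B$ is $O_\delta(|U'|)$, and Tur\'an's theorem on the collision graph restricted to $B$ yields $\Omega_\delta(|U'|)$ vertices of distinct degree.

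Your route avoids the auxiliary set $B$ and bounds collisions over all of $U'$ directly; this is exactly what forces you to upgrade the anticoncentration input to the Gaussian-tail pointwise bound, as you correctly diagnose. That bound is true (and can be obtained by splitting the Bernoulli sum in two, applying Hoeffding to one half and Littlewood--Offord to the other), so your argument goes through; you also replace the Tur\'an step by the Cauchy--Schwarz/Jensen inequality $|\{X_u\}|\ge n^2/(n+2\,\mathrm{coll})$. The trade-off: your proof is more streamlined once the LCLT is granted, while the paper's is more elementary, using only Chebyshev, Markov, the basic Littlewood--Offord inequality, and Tur\'an. In particular, the ``balanced'' trick is precisely the device that removes the obstacle you anticipated.
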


The idea of the proof is that in $G({\bf p})$
a vertex typically has degree within $O(\sqrt{N})$
of its expectation, and if we restrict 
to the set $B$ of such `balanced' vertices 
then a pair of vertices $u,u' \in U'$ can only have equal degrees 
when their expected degrees differ by $O(\sqrt{N})$.
The separation of expected degrees implies that
$B$ has only $O_{\delta }(|U'|\sqrt{N})$ such pairs.
Each has equal degrees with probability $O_{\delta }(1/\sqrt{N})$,
by diversity and an anti-concentration estimate,
so we can ensure that $B$ has only $O_{\delta }(|U'|)$
pairs with equal degree in $U'$; then Tur\'an's theorem
will provide the required conclusion.
The required anti-concentration estimate 
is the following generalisation of the well-known
Erd\H{o}s-Littlewood-Offord inequality \cite{E2};
this is not a new result, but for completeness and
the convenience of the reader we will give a simple 
deduction from \cite{E2}, namely the case that all $p_i=1/2$. 

\begin{prop} 
\label{prop: anticoncentration bound}
Fix non-zero reals $a_1,\dots,a_n$
and $p_1,\dots,p_n$ in $[0.1,0.9]$.
Let $X_1,\ldots, X_n$ be independent 
Bernoulli random variables with $X_i \sim Be(p_i)$,
i.e.\ $\mb{P}(X_i=1)=p_i$ and $\mb{P}(X_i=0)=1-p_i$. Then 
$\max _{x\in {\mb R}} {\mb P}(\sum _{i\in [n]} a_i X_i = x) = O(n^{-1/2})$.
\end{prop}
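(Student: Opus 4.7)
The plan is to reduce the claim to the symmetric case $p_i \equiv 1/2$, which is the classical Erd\H{o}s-Littlewood-Offord inequality \cite{E2}, by extracting a fair-coin component from each $X_i$. Concretely, for each $i$ I would set $q_i := 2\min(p_i, 1-p_i) \geq 0.2$ and realize $X_i$ as a mixture: independently sample $B_i \sim Be(q_i)$ and $Y_i \sim Be(1/2)$, and take $X_i = Y_i$ if $B_i = 1$ while $X_i = \mathbf{1}[p_i > 1/2]$ if $B_i = 0$. A direct check verifies $\mb{P}(X_i = 1) = p_i$ in both cases $p_i \leq 1/2$ and $p_i > 1/2$, so this coupling reproduces the correct marginals and the $X_i$ remain independent across $i$.

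With the coupling in place the rest is routine. Conditioning on $B = (B_1,\ldots,B_n)$ and setting $S = S(B) := \{i : B_i = 1\}$, one can write
\[ \sum_i a_i X_i = \sum_{i \in S} a_i Y_i + c(B), \]
where $c(B) := \sum_{i \notin S} a_i \mathbf{1}[p_i > 1/2]$ is determined by $B$ alone. Since the $Y_i$ for $i \in S$ are i.i.d.\ fair Bernoullis and each $a_i \neq 0$, Erd\H{o}s-Littlewood-Offord applied to $\{a_i\}_{i \in S}$ yields
\[ \max_{y \in \mb{R}}\, \mb{P}\Big (\sum_{i \in S} a_i Y_i = y \,\Big |\, B \Big ) = O\big (|S|^{-1/2}\big ) \]
whenever $|S|\geq 1$, and the bound is trivially $1$ otherwise.

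To close the argument I would observe that $|S| = \sum_i B_i$ is a sum of independent Bernoullis with $\mb{E}|S| \geq 0.2n$, so a Chernoff bound gives $\mb{P}(|S| \leq 0.1n) \leq e^{-\OO(n)}$. Combining, for any $x \in \mb{R}$,
\[ \mb{P}\Big (\sum_i a_i X_i = x\Big ) \leq \mb{P}(|S| \leq 0.1 n) + \mb{E}\big [O(|S|^{-1/2}) \cdot \mathbf{1}[|S| > 0.1n]\big ] = O(n^{-1/2}), \]
uniformly in $x$, as required. The main obstacle is identifying the right coupling: the trick is to isolate enough fair-coin randomness within each $X_i$ to invoke the symmetric inequality, and the hypothesis $p_i \in [0.1, 0.9]$ is precisely what guarantees $q_i \geq 0.2$ so that $|S|$ is linear in $n$ with high probability.
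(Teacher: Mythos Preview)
Your proof is correct and follows essentially the same approach as the paper: the coupling $X_i = Y_i$ if $B_i=1$ and $X_i = \mathbf{1}[p_i>1/2]$ otherwise is exactly the paper's decomposition $X_i = W_iY_i + (1-W_i)Z_i$ with $w_i = 2\min(p_i,1-p_i)$ and $z_i \in \{0,1\}$, and both arguments then condition on the mixing variables and invoke \cite{E2} on the fair-coin coordinates. The only cosmetic difference is that you use a Chernoff bound for $\mb{P}(|S|\le 0.1n)$ whereas the paper uses Chebyshev to get $O(n^{-1})$; either suffices.
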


\begin{proof}
For each $i$ we fix $w_i,z_i \in [0,1]$
with $p_i = w_i/2 + (1-w_i)z_i$ 
and write $X_i = W_i Y_i + (1-W_i)Z_i$, 
where $Y_i \sim Be(1/2)$, $W_i \sim Be(w_i)$ 
and $Z_i \sim Be(z_i)$ are independent.
We make this choice so that each $w_i \ge 0.2$:
e.g.\ if $p_i \le 1/2$ let $z_i=0$ and $w_i=2p_i$,
or if $p_i > 1/2$ let $z_i=1$ and $w_i=2(1-p_i)$.
We condition on any choice $C$ 
of the $W_i$'s and $Z_i$'s,
which determines $I := \{i: W_i=1\}$. 
By Chebyshev's inequality,
$\mb{P}(|I|<n/10) < O(n^{-1})$,
so it suffices to bound
${\mb P}(\sum _{i\in [n]} a_i X_i = x \mid C)$
for any $C$ such that $|I| \ge n/10$;
the required bound $O(n^{-1/2})$ holds 
by \cite{E2} applied to $(Y_i: i \in I)$.
\end{proof}

We also use of the following version of Tur\'an's theorem (see e.g. Chapter 6 in \cite{Bol}).

\begin{thm}
	Any $n$-vertex graph $G$ with average degree $d$ contains 
	an independent set of size at least $n/(d+1)$.
\end{thm}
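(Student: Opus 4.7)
The plan is to prove this via the classical Caro--Wei probabilistic argument, which actually yields the stronger lower bound $\alpha(G) \geq \sum_{v \in V(G)} 1/(d(v)+1)$; the stated inequality will then drop out from Jensen's inequality.

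First, I would choose a uniformly random linear ordering $\pi$ of $V(G)$, and form the set
\[
I_\pi := \{v \in V(G) : v \text{ precedes every neighbour of } v \text{ in } \pi\}.
\]
A direct check shows that $I_\pi$ is independent: if $u,v \in I_\pi$ were joined by an edge, then whichever of $u,v$ comes later in $\pi$ would have the other as an earlier-appearing neighbour, contradicting its membership in $I_\pi$. Thus $\alpha(G) \geq |I_\pi|$ pointwise, and hence $\alpha(G) \geq \mathbb{E}|I_\pi|$.

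Next, for each $v \in V(G)$ the event $\{v \in I_\pi\}$ depends only on the relative order of $v$ and its $d(v)$ neighbours among themselves, and it occurs iff $v$ is the first of these $d(v)+1$ vertices. Since $\pi$ is uniform this probability is exactly $1/(d(v)+1)$, so by linearity of expectation
\[
\mathbb{E}|I_\pi| \;=\; \sum_{v \in V(G)} \frac{1}{d(v)+1}.
\]
Finally, I would invoke convexity of $x\mapsto 1/(x+1)$ on $[0,\infty)$: Jensen's inequality applied to the average $\bar d := \frac{1}{n}\sum_v d(v) = d$ gives
\[
\sum_{v \in V(G)} \frac{1}{d(v)+1} \;\geq\; \frac{n}{\bar d + 1} \;=\; \frac{n}{d+1}.
\]
Combining these bounds produces an ordering $\pi$ (in fact most orderings) for which $|I_\pi| \geq n/(d+1)$, giving the desired independent set.

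There is no genuine obstacle here: the only two points requiring attention are the one-line verification that $I_\pi$ is independent and the routine application of Jensen's inequality. Everything else is linearity of expectation on a uniformly random permutation.
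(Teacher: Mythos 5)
Your proof is correct and complete. Note, though, that the paper itself does not prove this statement at all: it simply quotes it as a standard form of Tur\'an's theorem with a pointer to Chapter 6 of Bollob\'as's \emph{Extremal Graph Theory}, so there is no in-paper argument to compare against. What you supply is the classical Caro--Wei argument, and every step checks out: $I_\pi$ is independent because the later endpoint of any edge inside $I_\pi$ would have an earlier neighbour; the event $v\in I_\pi$ depends only on the relative order of $v$ and its $d(v)$ neighbours, giving probability exactly $1/(d(v)+1)$; linearity of expectation yields the stronger bound $\alpha(G)\ge \sum_v 1/(d(v)+1)$; and convexity of $x\mapsto 1/(x+1)$ on $[0,\infty)$ together with Jensen's inequality converts this into $n/(d+1)$. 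The textbook route cited by the paper is usually the greedy/induction argument (repeatedly remove a minimum-degree vertex and its neighbourhood), which is marginally more elementary since it avoids probability; your version buys the sharper degree-sequence bound and fits naturally with the probabilistic flavour of the rest of the paper, where only the weaker averaged statement is ever used.
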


\begin{proof}[Proof of Lemma \ref{lem: switch to probabilistic world}]
Note that we can assume $N$ is large, by taking $c>0$ small enough. Let $H$ be a random induced subgraph 
according to $G({\bf p})$ and
\begin{align*}
B & = \{ u \in U':
 \big |d_H(u) - {\mathbb E}\big (d_{G({\bf p})}(u) \big )\big | 
		\leq \sqrt{N} \}, \\
P & = \big \{ \{u,u'\} \subset U': \big | 
		{\mathbb E}\big (d_{G({\bf p})}(u) \big ) 
		- {\mathbb E}\big (d_{G({\bf p})}(u') \big )\big | \leq 
		2\sqrt{N} \big \}, \text{ and } \\
J & = \{ \{u,u'\} \in P: d_H(u) = d_H(u') \}.
\end{align*}
We claim that with positive probability
we have both $|B| \ge |U'|/2$ and $|J| = O_\dD(|U'|)$.
This claim implies the lemma, as by Tur\'an's theorem
$J[B]$ contains an independent set of size $\Omega_\dD(|U'|)$,
which must consist of vertices with distinct degrees, 
as if $u,u'$ are in $B$ and $d_H(u)=d_H(u')$
then $\{u,u'\} \in P$, so $\{u,u'\} \in J$.

To prove the claim, we first estimate $|B|$.
For any $u \in U'$ we have 
	${\mathbb V}ar \big (d_{G({\bf p})}(u) \big ) 
	\leq \sum _{v\in V} p_v(1-p_v) \leq N/4$ 
and so Chebyshev's inequality gives 
${\mathbb P}(u \notin B) \leq \frac {N/4}{\sqrt{N}^2} = 1/4$. 
Thus ${\mathbb E}(|U'\sm B|) \leq  |U'|/4$,
so by Markov's inequality 
${\mathbb P}\big (|U' \sm B| \geq |U'|/2 \big ) \leq 1/2$,
i.e.\ ${\mathbb P}\big (|B| \geq |U'|/2 \big ) \geq 1/2$.

To estimate $|J|$, we first note that 
by the degree separation property 
we have $|P| \leq 2\dD^{-1}|U'|N^{1/2}$. 
Each $\{u,u'\} \in P$ belongs to $J$ with probability 
${\mathbb P}\big (d_H(u) - d_H(u') = 0 \big ) = O((\dD N)^{-1/2})$ 
by Proposition \ref{prop: anticoncentration bound},
which can be applied by the diversity property
and the assumption that all $p_v \in [0.1,0.9]$.
Thus $\mb{E}|J| = O_\dD(|U'|)$, 
so $\mb{P}\big (|J| = O_\dD(|U'|) \big )>1/2$
by Markov's inequality.
This proves the claim and so the lemma.
\end{proof}

\subsection{Solving the relaxation in diverse graphs}

The following lemma shows how to find
the distribution ${\bf p}$ required to apply 
Lemma \ref{lem: switch to probabilistic world}.

\begin{lem}
	\label{lem: constructing probability vector}
Given $\delta > 0$ there is $c>0$ such that the following holds. Let $G$ be a graph with vertex partition $V(G) = U \cup V$ where $U$ is $\dD$-diverse, $|U| \le N^{2/3}$ and 
$|V|=N$. Then there are ${\bf p} \in [0.1, 0.9]^V$ 
and $U' \subset U$ with $|U'| \geq c |U|$ so that
	$| {\mathbb E}\big ( d_{G({\bf p})}(u) \big ) - 
	{\mathbb E}\big ( d_{G({\bf p})}(u') \big ) | \geq 1$
for all distinct  $u,u' \in U'$.
\end{lem}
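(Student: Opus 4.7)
Set $D_u(\mathbf{p}) := |N_G(u)\cap U|+\sum_{v\in N_G(u)\cap V} p_v$ so that $D_u=\mathbb{E}(d_{G(\mathbf{p})}(u))$, and define the conflict graph $H_\mathbf{p}$ on $U$ by $uu'\in E(H_\mathbf{p})$ iff $|D_u-D_{u'}|<1$. An independent set of size at least $c|U|$ in $H_\mathbf{p}$ is exactly the $U'$ we seek. Since $H_\mathbf{p}$ is a unit-interval graph with vertex $u$ placed at the point $D_u$, setting $M:=\max_{x\in\mathbb{R}}|\{u\in U: D_u\in[x,x+1)\}|$ and taking every $M$-th value in sorted order of the $D_u$ yields an independent set of size $\ge|U|/M$ (among any $M+1$ consecutive values in sorted order, the first and last must differ by at least $1$). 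Hence the lemma reduces to producing $\mathbf{p}\in[0.1,0.9]^V$ for which $M=O_\delta(1)$.

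The natural first attempt is to draw $p_v$ i.i.d.\ uniformly in $[0.1,0.9]$. Using $\delta$-diversity, $D_u-D_{u'}$ is an affine function of $\mathbf{p}$ with at least $\delta N$ non-zero $\pm 1$ coefficients, so Proposition \ref{prop: anticoncentration bound} gives $\mathbb{P}(|D_u-D_{u'}|<1)=O_\delta(N^{-1/2})$, and summing over pairs gives $\mathbb{E}\,e(H_\mathbf{p})=O_\delta(|U|^2N^{-1/2})$. When $|U|$ is of order $N^{2/3}$ this is $\Omega(N^{5/6})$, the average degree in $H_\mathbf{p}$ is $\Omega(N^{1/6})$, and Tur\'an's theorem only produces an independent set of size $\Omega(\sqrt N)$, short of $c|U|$ by a factor $N^{1/6}$. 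Indeed the expected number of values $D_u$ in a unit interval near the mode is $\Omega(N^{1/6})$, so i.i.d.\ sampling simply cannot achieve $M=O_\delta(1)$ in the worst case (for instance when every vertex of $U$ has the same degree in $G$, making the means $\mathbb{E}_\mathbf{p}[D_u]$ coincide).

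The heart of the proof is therefore the graph-dependent randomisation flagged in the outline of this subsection. I expect a two-stage sampling: first draw an auxiliary random object $\sigma$ whose law reflects the family of neighbourhoods $\{N_G(u)\cap V: u\in U\}$ (for example a random partition of $V$ into blocks determined by adjacency patterns to a small random subset of $U$, or a random bijection on $V$ tailored to the columns of the bipartite adjacency matrix); then draw $\mathbf{p}$ from a conditional distribution on $[0.1,0.9]^V$ given $\sigma$ that retains enough independent coordinate-wise structure for Proposition \ref{prop: anticoncentration bound} to apply pairwise. The aim is that conditioning on $\sigma$ forces the values $(D_u)_{u\in U}$ to acquire a genuine $u$-dependent spread of order $|U|$, much larger than the $O(\sqrt N)$ spread available from i.i.d.\ fluctuations alone, while the conditional distribution of $\mathbf{p}$ continues to provide pairwise anti-concentration and so prevents local clustering. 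A first-moment argument over the joint randomness then produces, with positive probability, a $\mathbf{p}$ satisfying $M=O_\delta(1)$, from which $|U'|\ge c|U|$ follows by the greedy selection above. The main obstacle, and the point at which the paper's new idea must enter, is designing this two-stage distribution so that the spreading effect of the first stage and the pairwise anti-concentration of the second stage cooperate uniformly over all pairs of $U$ at once.
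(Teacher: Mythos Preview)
Your framework is sound: defining the conflict graph $H_{\mathbf p}$, observing that an independent set of size $c|U|$ suffices, and noting that this requires the maximum pairwise collision probability to be $O_\delta(|U|^{-1})$ rather than the $O_\delta(N^{-1/2})$ that i.i.d.\ coordinates deliver. You also correctly diagnose that the missing ingredient is a graph-dependent randomisation of $\mathbf p$. But the proposal stops precisely at the crux: you describe the shape of the needed distribution only in vague terms (``a random partition of $V$ \dots\ or a random bijection on $V$'') and explicitly defer the construction to ``the paper's new idea''. That is the gap.

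The paper's construction is not a two-stage scheme of the kind you sketch; it is a single-stage random $\mathbf p$ built directly from the neighbourhood vectors. For each $u\in U$ let $\mathbf u\in\{-1,1\}^V$ be its signed indicator ($\mathbf u_v=1$ iff $uv\in E(G)$), pick integers $m_u$ independently and uniformly from $[-|U|,|U|]$, and set
\[
\mathbf p \;=\; \tfrac12\,\mathbf 1 \;+\; \sum_{u\in U}\frac{m_u}{N}\,\mathbf u.
\]
The point is that $D_u-D_{u'}$ becomes an affine function of the $m_w$'s in which the coefficient of $m_u$ is $N^{-1}(\mathbf u-\mathbf u')\cdot\mathbf u = 2N^{-1}|(N_G(u)\triangle N_G(u'))\cap V|\ge 2\delta$ by diversity. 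Hence, conditioning on all $m_w$ with $w\ne u$, the event $|D_u-D_{u'}|<1$ confines $m_u$ to an interval of length $O(\delta^{-1})$, which has probability $O(\delta^{-1}|U|^{-1})$. This is exactly the improvement from $N^{-1/2}$ to $|U|^{-1}$ that your analysis identified as necessary; summing over pairs gives $\mathbb E\,e(H_{\mathbf p})=O_\delta(|U|)$ and Tur\'an finishes. The remaining technicality is that $\mathbf p$ need not lie in $[0.1,0.9]^V$ (the coordinate variances are $|U|^3/N^2=O(1)$, not $o(1)$), so one truncates $\mathbf p$ coordinatewise and restricts attention to ``good'' $u$ for which few coordinates of $\mathbf p - \tfrac{m_u}{N}\mathbf u$ required truncation; Chebyshev shows most $u$ are good, and for good $u$ the monotonicity argument above survives the truncation up to constants.
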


The key idea is that our construction of the 
probability vector ${\bf p}$ is itself random,
with a distribution depending on
the neighbourhood structure of $G$.
We start by sketching 
a simplified proof of the lemma 
under the stronger assumption $|U| = O(N^{2/3}/\log ^{1/3}N)$.
For each $u \in U$ we define a `signed neighbourhood vector' 
${\bf u} \in \{-1,1\}^V$ by 
${\bf u}_v = 1$ if $uv \in E(G)$
or ${\bf u}_v = -1$ otherwise. 
Let ${\bf 1} \in [0,1]^V$ denote the `all-1' vector.
We randomly select integers 
$m_u \in \big [- |U|, |U| \big ]$ 
uniformly and independently for all $u \in U$.
Then we let
\begin{equation}
	\label{equation: probability choice}
	{\bf p} := \frac {1}{2} {\bf 1}  
	+ \sum _{u\in U} \Big ( \frac {m_u}{N} \Big ) {\bf u}.
\end{equation}
The variance of each coordinate $p_v$ of $\bf p$ is at most $|U|^3/N^2 = O(\log N)^{-1}$ and so, by a standard concentration argument, with high probability
${\bf p} \in [0.1,0.9]^V$
(for an appropriate choice of the implicit 
constant in the stronger assumption on $|U|$).
On the other hand, our definition of ${\bf p}$
in terms of the neighbourhood structure 
relates expected degree differences 
to our diversity assumption, as follows.
For any distinct $u,u'$ in $U$, as
${\mathbb E}\big ( d_{G({\bf p})}(u) \big ) = 
d_{G[U]}(u) + ({\bf 1} +  {\bf u}) \cdot {\bf p}/2$,
we have
\begin{equation}
	\label{equation: dependence of p}
	 {\mathbb E}\big ( d_{G({\bf p})}(u) \big )  - 
	{\mathbb E}\big ( d_{G({\bf p})}(u') \big ) = 
	d_{G[U]}(u) - d_{G[U]}(u') + ({\bf u} - {\bf u}') 
	\cdot {\bf p}/2 .
\end{equation}
Let ${\cal E}_{u,u'}$ denote the event that 
	$|{\mathbb E}\big ( d_{G({\bf p})}(u) \big ) - 
	{\mathbb E}\big ( d_{G({\bf p})}(u') \big )| \leq 1$.
Conditional on any choice of ${\bf m} = (m_w)_{w \ne u}$,
we see from \eqref{equation: probability choice}
and \eqref{equation: dependence of p} 
that there is some interval $I$ of length $4$
(depending on ${\bf m}$) such that
${\cal E}_{u,u'}$ holds if and only if
$({\bf u} - {\bf u}') \cdot 
\tfrac{m_u}{N} {\bf u} \in I$.
As $({\bf u} - {\bf u}') \cdot {\bf u} 
=  2\big |\big (N_G(u)\triangle N_G(u')\big ) \cap V\big |
\ge 2\dD N$, this corresponds to a choice of $m_u$
in an interval of length $O(\dD^{-1})$,
which occurs with probability $O(\dD^{-1}|U|^{-1})$.
By Markov's inequality, we can therefore choose 
${\bf p} \in [0.1,0.9]^V$ so that only
$O(\dD^{-1}|U|)$ such ${\cal E}_{u,u'}$ hold.
Then by Tur\'an's theorem there is $U' \sub U$
of size $\OO(\dD|U|)$ within which
no such events ${\cal E}_{u,u'}$ hold,
as required.

The actual proof is similar to the above sketch,
except that we cannot rely on concentration
of measure to ensure ${\bf p} \in [0.1,0.9]^V$;
instead, we `truncate the outliers'.

\begin{proof}[Proof of Lemma \ref{lem: constructing probability vector}]
By taking $c$ small enough we may assume that $N \geq N_0(\delta )$. Secondly, replacing $U$ with a subset if necessary, we can also assume that $|U| \leq \delta N^{2/3}/5$. Let $(m_u)_{u\in U}$ and ${\bf p}$ be as in \eqref{equation: probability choice}.
For $u \in U$ we write
${\bf q}^u = {\bf p} - \tfrac{m_u}{N} {\bf u}$,
and note that ${\bf q}^u$ is independent of $m_u$. 
We call $u$ {\em good} if there are at most $\dD N/2$
coordinates $v \in V$ with $q^u_v \notin [0.2,0.8]$, and bad otherwise. We also write $U^g$ for the set of good vertices in $U$.

We claim that $\mb{P}(|U^g| \ge |U|/2) > 1/2$.
To see this, we note for any $u$ and $v$ that
$q^u_v - 1/2 = N^{-1} \sum_{u' \ne u} \pm m_{u'}$
is a random variable with mean $0$
and variance at most $N^{-2} |U|^3 < 0.01\dD$,
so by Chebyshev's inequality
$\mb{P}(|q^u_v - 1/2|>0.3) 
< 0.01 \dD/0.3^2 = \dD/9$.
Thus the expected number of $v$
with $q^u_v \notin [0.2,0.8]$
is at most $\dD N/9$, so by Markov's inequality
$u$ is bad with probability less than $1/4$.
Now the expected number of bad $u$
is less than $|U|/4$, so by Markov's inequality
more than half of $U$ is bad
with probability less than $1/2$.
The claim follows.

Now we define ${\bf p}' \in [0.1,0.9]^V$ 
by truncating ${\bf p}$: for each $v \in V$, 
if $p_v<0.1$ let $p'_v=0.1$,
if $p_v>0.9$ let $p'_v=0.9$,
or let $p'_v=p_v$ otherwise.
We let ${\cal E}_{u,u'}$ denote the event that 
$|{\mathbb E}\big ( d_{G({\bf p}')}(u) \big ) - 
 {\mathbb E}\big ( d_{G({\bf p}')}(u') \big )| \leq 1$.

We claim for any distinct $u,u'$ in $U$ that 
$\mb{P}({\cal E}_{u,u'} \mid u \in U^g)
< 4\dD^{-1}|U|^{-1}$.
To see this, we condition on any choice 
of ${\bf m} = (m_w)_{w \ne u}$ such that $u$ is good. 
We let $V_0$ be the set of $v \in V$ such that
$q^u_v \notin [0.2,0.8]$, so that $|V_0| \le \dD N/2$.
For each $v \in V \sm V_0$ we have
$p_v = q^u_v + N^{-1} m_u u_v 
= q^u_v \pm N^{-1}|U| \in [0.1,0.9]$, 
so $p'_v = p_v$ for any choice of $m_u$.
Given ${\bf m}$, we can consider
$f(m_u) := {\mathbb E}\big ( d_{G({\bf p}')}(u) \big ) - 
 {\mathbb E}\big ( d_{G({\bf p}')}(u') \big )
= 	d_{G[U]}(u) - d_{G[U]}(u') 
 + ({\bf u} - {\bf u}') \cdot {\bf p}'/2$
as a function of the random variable $m_u$. 
As in the sketch above ${\mathcal E}_{u,u'}$ 
can only occur if, conditioned on $\bf m$, $f(m_u)$ 
lies in an interval $I$ of length $2$ (again, depending 
on $\bf m$). To control this probability, note that
for any $i \in [-|U|,|U|-1]$ we can write
$f(i+1) - f(i) = \sum_{v \in V} (u_v-u'_v) 
N^{-1}u_v g_{i,v}/2$, where $g_{i,v} \in [0,1]$
and $g_{i,v}=1$ for all $v \in V \sm V_0$;
the interpretation of $g_{i,v}$ is the
proportion of the total change in $p_v$
that is contained in $[0.1,0.9]$.
In particular, $f(i+1) - f(i)
\ge \sum_{v \in V \sm V_0} (u_v-u'_v)N^{-1}u_v/2
\ge N^{-1}|(N_G(u)\triangle N_G(u')) 
\cap (V \sm V_0)| \geq N^{-1}\big ( |N_G(u)\triangle N_G(u')| - |V_0| - |U|\big ) > \dD/4$.
As ${\cal E}_{u,u'}$ only occurs if $f(m_u)$ lies in the interval $I$ of length $2$,
we see that ${\cal E}_{u,u'}$ only occurs if $m_u$ lies
in an interval of length at most $8\dD^{-1}$;
the claim follows.

The conclusion is similar to that in 
the above sketch. Indeed, letting $J$ be
the graph on $U^g$ where $uu'$ is an edge
if ${\cal E}_{u,u'}$ holds, we have
$\mb{E} [e(J)] < 8\dD^{-1} |U|/2$,
so $\mb{P}(e(J)>8\dD^{-1}|U|)<1/2$.
Thus with positive probability both
$|U^g| \ge |U|/2$ and $e(J) \le 8\dD^{-1}|U|$.
By Tur\'an's theorem, $J$ has an independent set $U'$
with $|U'| \ge \dD |U|/32$, as required.
\end{proof}

\subsection{Proof of Theorems \ref{theorem: D_D_Ramsey} and  
\ref{theorem: distinct_degrees_Ramsey}}
\label{subsec:pfs}

We start with Theorem \ref{theorem: distinct_degrees_Ramsey},
which follows from Lemmas
\ref{lem: switch to probabilistic world}
and \ref{lem: constructing probability vector}. To see this, again note that by taking $c$ sufficiently small we may assume $N \geq N_0(\delta)$. Fix a $\dD$-diverse set $U$ 
of size $\tfrac{1}{2} N^{2/3}$ and set $V = V(G) \sm U$. Applying
Lemma \ref{lem: constructing probability vector} we obtain ${\bf p} \in [0.1, 0.9]^V$ 
and $U' \subset U$ with $|U'| \geq c |U|$ such that 
	$| {\mathbb E}\big ( d_{G({\bf p})}(u) \big ) - 
	{\mathbb E}\big ( d_{G({\bf p})}(u') \big ) | \geq 1$
for all distinct  $u,u' \in U'$.
Then Lemma \ref{lem: switch to probabilistic world}
gives $W \subset V$ so that $G[U \cup W]$ has at least 
$c |U'|$ distinct degrees, as required.

To deduce Theorem \ref{theorem: D_D_Ramsey}
it suffices to show that if $G$ 
is an $N$-vertex $C$-Ramsey graph 
then $G$ satisfies the hypotheses of
Theorem \ref{theorem: distinct_degrees_Ramsey},
i.e.\ has a $\dD$-diverse set $U$ 
of size $N^{2/3}$ with $\dD = \OO_C(1)$.
We can deduce this from results 
of Kwan and Sudakov \cite{KS2} as follows.
Combining their Lemma 3 part 1 and Lemma 4,
setting their $\dD$ equal to $1/4$,
we obtain $W \sub V(G)$ with $|W|=\OO_C(N)$
such that for any $u \in W$ there are
at most $|W|^{1/4}$ vertices $u' \in W$ with 
$|N_{G[W]}(u) \triangle N_{G[W]}(u')| < O_C(|W|)$.
By Tur\'an's theorem, $W$ contains an $\OO_C(1)$-diverse 
set $U$ with $|U| = \OO_C(N^{3/4})
> N^{2/3}$, as required.

\section{Optimal homogeneous sets}

In this section we will prove 
Theorem \ref{theorem: distinct_degrees_large_homogen},
which gives an optimal bound 
on $\hom(G)$ when $|V(G)| \gg f(G)$.
In the first subsection we analyse the approximate structure 
of graphs $G$ with $f(G)$ bounded. 
The second subsection introduces control graphs 
which are graphs with a special structure 
that facilitates finding induced subgraphs 
with many distinct degrees. 
The theorem itself is proved in the final subsection.

\subsection{Approximate structure}

Our first lemma, which is similar to \cite[Lemma 2.3]{BS},
shows that if a graph does not have an induced subgraph 
with many distinct degrees then we can partition its vertices
into a few parts so that vertices within any part
have similar neighbourhoods.

\begin{lem}
\label{lem: coarse structure}
Suppose that $G$ is an $N$-vertex graph with $f(G)<k$.
Then there is a partition $V(G) = V_1\cup \cdots \cup V_L$ 
with $L \leq 4k$ so that 
for all $i \in [L]$ and $u,u' \in V_i$ we have
$ \big |N_G(u) \triangle N_G(u') \big | \leq   2^{11} k^2$. 
\end{lem}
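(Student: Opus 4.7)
The plan is to build the partition greedily as a Hamming-ball cover of $V(G)$, and to rule out having too many balls by producing --- via Proposition 2.4 --- an induced subgraph with $k$ distinct degrees, contradicting $f(G)<k$.

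Concretely, set $r := 2^{10} k^2$. Construct the $V_i$'s iteratively: at stage $i$, pick any yet-uncovered vertex $v_i$ and let $V_i$ consist of all yet-uncovered $u$ with $|N_G(u) \triangle N_G(v_i)| \leq r$. The triangle inequality for symmetric differences then yields $|N_G(u) \triangle N_G(u')| \leq 2r = 2^{11}k^2$ for every $u,u' \in V_i$, which is the within-part bound required. Since $v_j \notin V_i$ whenever $j > i$, the centres $v_1,\dots,v_L$ are pairwise at Hamming distance strictly greater than $r$, so the whole task reduces to showing $L \leq 4k$.

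Assume for contradiction that $L \geq 4k+1$, and set $S := \{v_1,\dots,v_{4k+1}\}$. Sample $W \sub V(G) \sm S$ by keeping each vertex independently with probability $1/2$, and let $H := G[S \cup W]$. For distinct $i,j$ one has
\[
d_H(v_i) - d_H(v_j) = c_{ij} + \sum_{w \in V(G) \sm S} \eps_w \, a^{(i,j)}_w,
\]
where $c_{ij}$ depends only on $G[S]$, the $\eps_w := \mathbf{1}_{w \in W}$ are independent $\mathrm{Be}(1/2)$ variables, and $a^{(i,j)}_w \in \{-1,0,1\}$ is nonzero precisely on $(N_G(v_i) \triangle N_G(v_j)) \sm S$. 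This last set has size at least $r - |S| \geq 2^9 k^2$, so Proposition 2.4 applied to its nonzero coefficients gives $\mathbb{P}(d_H(v_i) = d_H(v_j)) \leq c_0 / \sqrt{2^9 k^2} \leq 1/(16k)$ for a suitable absolute constant $c_0$.

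By linearity, the expected number of equal-degree pairs in $S$ is at most $\binom{4k+1}{2}/(16k) \leq (4k+1)/8$, so some realisation $W$ achieves at most this many such pairs. Let $J$ be the graph on $S$ whose edges are those pairs; Tur\'an's theorem then yields an independent set $T \sub S$ in $J$ of size at least $|S|/(2e(J)/|S|+1) \geq k$. The vertices of $T$ have pairwise distinct degrees in $H$, so $H$ is an induced subgraph of $G$ with at least $k$ distinct degrees --- contradicting $f(G) < k$. I expect the main obstacle to be calibrating the constants: $r$ must be chosen large enough that, after multiplying the anticoncentration bound by $\binom{|S|}{2}$ and passing through Tur\'an, the independent set we extract still has size at least $k$; the exponent $2^{11}k^2$ in the statement is precisely what this three-step budget forces.
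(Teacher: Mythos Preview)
Your proof is correct and follows essentially the same route as the paper's: both take a maximal (equivalently, greedy) set of pairwise far-apart centres, pass to a random induced subgraph, bound equal-degree pairs via Erd\H{o}s--Littlewood--Offord anticoncentration, and finish with Tur\'an's theorem. The only cosmetic difference is that you keep all of $S$ in $H$ deterministically and randomise only over $V(G)\setminus S$, whereas the paper randomises over all of $V(G)$ and then separately argues that $|W\cap S'|\ge 2k$; your variant is slightly cleaner, though note that the specific inequality $c_0\le\sqrt{2}$ you need comes from the explicit Erd\H{o}s--Littlewood--Offord bound rather than from the $O(n^{-1/2})$ statement of the anticoncentration proposition as written.
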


\begin{proof}
Take a maximal set $S = \{v_1,\ldots, v_L\} \subset V(G)$ 
such that $|N_G(v_i) \triangle N_G(v_j)| \geq 2^{10} k^2$ for 
all distinct $i,j$. We claim that $L\leq 4k$.
This will suffice to prove the lemma;
indeed, for any $u \in V(G)$ we can 
assign $u$ to some part $V_i$ such that 
$|N_{G}(u) \triangle N_{G}(v_i)| \leq 2^{10} k^2$,
which exists by maximality of $S$.

To prove the claim, suppose	 for contradiction
we have $S' \subset S$ with $|S'|=4k$. 
We select $W \subset V(G)$ uniformly at random 
and consider the random graph $J$ on $S'$
consisting of all pairs $\{v_i,v_j\} \sub S' \cap W$
with the same degree in $G[W]$.
Fix any $\{v_i,v_j\} \sub S'$,
write $D = |N_G(v_i)\sm N_G(v_j)|$ 
and $D' = |N_G(v_j)\sm N_G(v_i)|$,
say with $D \geq D'$. 
Conditional on any intersection 
of $W$ with $N_G(v_j)\sm N_G(v_i)$,
we can bound $\mb{P}(\{v_i,v_j\} \in J)$ by
\[ 	 \max _j{\mathbb P}(Bin(D,1/2) = j) \leq D^{-1/2} 
\leq 2|N_G(v_i)\triangle N_G(v_j)|^{-1/2}
\le (16k)^{-1}. \]
Thus $\mb{E} e(J) \le \tbinom{4k}{2} (16k)^{-1} < k/2$,
so ${\mathbb P}(e(J) \leq k) > 1/2$.
As ${\mathbb P} \big (|W\cap S'| \geq 2k\big ) \geq 1/2$, 
we can fix $W$ with $|W \cap S'| \geq 2k$ and $e(J) \leq k$. 
Tur\'an's theorem then gives $I \subset W \cap S'$
of size $k$ that is independent in $J$,
i.e.\ its vertices have distinct degrees in $G[W]$. 
This contradiction proves the claim, and so the lemma.
\end{proof}

Our next lemma shows that neighbourhood similarity
as in Lemma \ref{lem: coarse structure} implies an essentially
homogeneous graph structure between parts and within parts
(for the latter we will apply it with $V_1=V_2$).

\begin{lem}
\label{lem: close approximation to complete or empty sets}
Let $G$ be a graph with subsets $V_1$ and $V_2$ of $V(G)$ 
such that $|V_1| \ge 2D$ and
$|N_{G}(v) \triangle N_G(v')| \leq D$ 
if $\{v,v'\} \subset V_1$ or $\{v,v'\} \subset V_2$. 
Then one of the following hold:
\begin{enumerate}[(1)]
	\item each vertex in $V_1$ has at most 
	$4D$ neighbours in $V_2$, or
	\item each vertex in $V_1$ has at least 
	$|V_2| - 4 D$ neighbours in $V_2$.
\end{enumerate}
\end{lem}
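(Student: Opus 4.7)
The plan is to prove the contrapositive: assume (1) fails and deduce (2). So pick $v_* \in V_1$ maximising $d_G(v, V_2)$ and set $T := N_G(v_*) \cap V_2$, so that $|T| > 4D$. My aim is to show that $T$ covers all but $O(D)$ of $V_2$; once I have $|T| \geq |V_2| - O(D)$, the $V_1$-hypothesis $|N_G(v) \cap V_2 \triangle T| \leq D$ immediately forces every $v \in V_1$ to have at least $|T| - D$ neighbours in $V_2$, delivering (2).

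To bound $|V_2 \setminus T|$ I run a two-step bootstrap between the two symmetric-difference hypotheses. First, the $V_1$-hypothesis gives $|T \setminus N_G(v)| \leq D$ and $|(N_G(v) \cap V_2) \setminus T| \leq D$ for every $v \in V_1$, which via double-counting yield
\[
\sum_{u \in T}\big(|V_1|-d_G(u,V_1)\big) = \sum_{v \in V_1}|T \setminus N_G(v)| \leq D|V_1|
\]
and
\[
\sum_{u \in V_2 \setminus T} d_G(u,V_1) = \sum_{v \in V_1}|(N_G(v) \cap V_2) \setminus T| \leq D|V_1|.
\]
Averaging the first over $T$ and using $|T| > 4D$ produces some $u^* \in T$ with $d_G(u^*, V_1) > \tfrac{3}{4}|V_1|$. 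The $V_2$-hypothesis then propagates this across all of $V_2$: every $u \in V_2$ satisfies $d_G(u, V_1) \geq d_G(u^*, V_1) - D > \tfrac{3}{4}|V_1| - D$. Substituting this lower bound into the second double-count gives $|V_2 \setminus T| \cdot (\tfrac{3}{4}|V_1| - D) \leq D|V_1|$, and a direct calculation (worst case $|V_1|=2D$) shows the right-hand side is at most $4D$ throughout the range $|V_1| \geq 2D$. Hence $|T| \geq |V_2| - 4D$.

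The main obstacle I anticipate is matching the constant in the conclusion exactly. The bootstrap just described gives $|T| \geq |V_2|-4D$, but the final step $d_G(v,V_2) \geq |T|-D$ costs one more $D$, so the naive conclusion is the weaker $d_G(v, V_2) \geq |V_2| - 5D$. To close this gap I would reiterate the bootstrap: with the improved input $|T| \geq |V_2|-4D$, the averaging step now produces $d_G(u^*, V_1) \geq |V_1|(1 - D/(|V_2|-4D))$, much closer to $|V_1|$, which in turn tightens $|V_2 \setminus T|$ to strictly less than $3D$, recovering the stated $d_G(v,V_2) \geq |V_2| - 4D$ after the final sym-diff comparison. (Alternatively, one exploits the integrality statement ``$|T| \geq 4D+1$'' to enlarge the fraction $\tfrac{3}{4}$ at the averaging step directly.)
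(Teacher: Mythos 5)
Your two double counts and the first pass of the bootstrap are correct: with $T=N_G(v_*)\cap V_2$ and $|T|>4D$ you do get some $u^*\in T$ with $d_G(u^*,V_1)>\tfrac34|V_1|$, hence $|V_2\setminus T|\le \frac{D|V_1|}{\tfrac34|V_1|-D}\le 4D$ for $|V_1|\ge 2D$. The genuine gap is exactly the step you flag and then claim to repair: the reiteration does \emph{not} tighten $|V_2\setminus T|$ below $3D$ in general. At the worst case $|V_1|=2D$ the second pass gives $|V_2\setminus T|\le \frac{2D|T|}{|T|-2D}$, which is $\le 3D$ only when $|T|\ge 6D$, i.e.\ only when $|V_2|\ge 10D$ or so. In the range $4D<|V_2|\le 8D$ (with $|V_1|$ close to $2D$) the ``improved input'' $|T|\ge |V_2|-4D$ is no stronger than the $|T|>4D$ you started with, so the iteration has a fixed point at $4D$ and returns the same bound; you are left with $d_G(v,V_2)\ge |V_2|-5D$, not $|V_2|-4D$. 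The parenthetical alternative (using integrality $|T|\ge 4D+1$ to replace $\tfrac34$ by $\tfrac{3D+1}{4D+1}$) only gains an additive $O(1)$, not the additive $D$ you need. So as written the proposal proves the lemma with $5D$ in place of $4D$, which is not the stated statement.

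The loss comes from routing every vertex's degree through the maximum-degree vertex ($d_G(v,V_2)\ge|T|-D$). The paper avoids this by proving a per-vertex dichotomy: for each $v\in V_1$, with $A=N_G(v)\cap V_2$ and $B=V_2\setminus N_G(v)$, the same kind of double count (a vertex of $A$ with more than $\tfrac34|V_1|$ neighbours in $V_1$ and a vertex of $B$ with fewer than $\tfrac14|V_1|$, contradicting the within-$V_2$ hypothesis) shows $|A|$ and $|B|$ cannot both be at least $4D$ --- no comparison to $v_*$, so no extra $D$ is lost. Alternatively, your route can be salvaged by using the maximality of $v_*$ beyond $|T|>4D$: since $d_G(v,V_2)\le d_G(v_*,V_2)$, every $v\in V_1$ satisfies $|(N_G(v)\cap V_2)\setminus T|\le |T\setminus N_G(v)|$, so both are at most $D/2$; replacing $D|V_1|$ by $D|V_1|/2$ in your second double count yields $|V_2\setminus T|\le \frac{D|V_1|/2}{\tfrac34|V_1|-D}\le 2D$, whence $d_G(v,V_2)\ge |T|-D\ge |V_2|-3D$ for all $v$. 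Without some such extra ingredient, the proposal has a real gap at the stated constant.
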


\begin{proof}	
Pick $v \in V_1$ and set $A = N(v) \cap V_2$ 
and $B = V_2 \sm N(v)$. 
It suffices to show that $|A|<4D$ or $|B|<4D$. 
For each $v' \in V_1$,
neighbourhood similarity gives 
$|N_G(v') \cap A| \geq |A| - D$ 
and $|N_G(v') \cap B| \leq D$. 
Suppose for contradiction that $|A|,|B| \geq 4D$.
Then $e(V_1,A) \ge |V_1|(|A|-D) \ge |V_1| \cdot 3|A|/4$,
so there is $a \in A$ with $|N_G(a) \cap V_1| > 3|V_1|/4$.
Similarly, $e(V_1,B) \le |V_1| D \le |V_1| \cdot |B|/4$,
so there is $b \in B$ with $|N_G(b) \cap V_1| < |V_1|/4$.
However, this gives the contradiction
$|N_{G}(a) \triangle N_G(b)| > |V_1|/2 \ge D$.
\end{proof}

In combination,  Lemmas \ref{lem: coarse structure}
and \ref{lem: close approximation to complete or empty sets}
show that if $f(G)$ is bounded then $G$ has the approximate
structure of a blowup, in the sense of the next definition.
The accompanying lemma applies a merging process 
to also guarantee that this blowup is non-degenerate,
in that it is not also a blowup with fewer parts.

\begin{dfn}
Let $H$ be a graph and $\mc{P}$ be a partition of $V(H)$.
Given parts $X$, $Y$ of $\mc{P}$,
we let $H[X,Y]$ be the graph on $X \cup Y$
with edges $\{xy \in E(H): x \in X, y \in Y\}$.

We call $H$ a \emph{$\mc{P}$-blowup} 
if each such $H[X,Y]$ 
(allowing $X=Y$) is empty or complete.

We call a $\mc{P}$-blowup $H$ \emph{non-degenerate} 
if it is not also a $\mc{P}'$-blowup for some partition 
$\mc{P}'$ of $V(H)$ with fewer parts than $\mc{P}$.

We call a graph $G$ on $V(H)$ a \emph{$\DD$-perturbation}
of $H$ if for any parts $X$ and $Y$ of $\mc{P}$
and $v,v'$ in $X$ we have
$|N_G(v,Y) \triangle N_H(v',Y)|\leq \DD$.
\end{dfn}

\begin{lem}
\label{lem: coarse robust structure}
Suppose that $G$ is an $N$-vertex graph with 
a partition $(V_1,\dots,V_L)$ of $V(G)$ such that
$|N_G(u)\triangle N_G(u')| \leq D_1$ 
for all $i \in [L]$ and $u,u'$ in $V_i$. 
Let $L, T, \DD  \in {\mathbb N}$ 
with $T\geq 5\DD \geq 200L^2D_1$. 
Then there are partitions $(W,R)$ of $V(G)$
and $\mc{P}$ of $W$ such that $|R| \le LT$, 
each part of $\mc{P}$ has size at least $T$,
and $G[W]$ is a $\DD$-perturbation 
of a non-degenerate $\mc{P}$-blowup.
\end{lem}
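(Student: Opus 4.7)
The plan is to discard small parts, apply Lemma \ref{lem: close approximation to complete or empty sets} to assign each surviving pair of parts a \emph{type} in $\{0,1\}$, and then merge parts with identical type vectors to obtain a non-degenerate blowup structure.

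First, let $R$ be the union of parts $V_i$ with $|V_i|<T$ and set $W = V(G)\setminus R$; then $|R|<LT$ and the surviving parts form a partition $\mathcal{P}_0$ of $W$ with each part of size at least $T$. Since $T \ge 5\Delta \ge 200L^2 D_1 \ge 2D_1$, Lemma \ref{lem: close approximation to complete or empty sets} applies to every ordered pair $(V_i, V_j)$ of parts of $\mathcal{P}_0$ (including $i=j$), letting me assign a type $t_{ij} \in \{0,1\}$: $t_{ij}=0$ means every $v \in V_i$ has at most $4D_1$ neighbours in $V_j$, and $t_{ij}=1$ means every such $v$ has at least $|V_j|-4D_1$ neighbours in $V_j$. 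A short edge count shows $t_{ij} = t_{ji}$: if $t_{ij}=0$ and $t_{ji}=1$ then $4D_1|V_i| \geq e(V_i,V_j) \geq |V_j|(|V_i|-4D_1)$, giving $|V_i||V_j| \leq 4D_1(|V_i|+|V_j|)$, which contradicts $|V_i|,|V_j| \geq T \geq 8D_1$.

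Next, I declare $V_i \equiv V_j$ iff $t_{ik} = t_{jk}$ for every surviving $k$, and let $\mathcal{P}$ be the partition of $W$ whose parts are the unions of $\equiv$-classes; every part of $\mathcal{P}$ still has size at least $T$. Applying the equivalence at $k=i$ and $k=j$ together with the symmetry of $t$ yields $t_{ii} = t_{ij} = t_{jj}$ whenever $V_i \equiv V_j$, so the graph $H$ on $W$ defined by declaring $H[X,Y]$ complete precisely when $t_{ij}=1$ for some (equivalently, any) $V_i \subseteq X$, $V_j \subseteq Y$ is a well-defined $\mathcal{P}$-blowup, including on the diagonal $X=Y$. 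For non-degeneracy, suppose $\mathcal{P}'$ has fewer parts than $\mathcal{P}$ and $H$ is also a $\mathcal{P}'$-blowup. By pigeonhole some $\mathcal{P}'$-part $Z$ meets two distinct $\mathcal{P}$-parts, so I can fix $u \in V_i \subseteq X$ and $v \in V_j \subseteq X'$ in $Z$ with $X \neq X'$. Since $V_i \not\equiv V_j$, pick $k$ with $t_{ik} \neq t_{jk}$ and any $w \in V_k \setminus \{u,v\}$; the $\mathcal{P}'$-blowup property (applied to the $\mathcal{P}'$-parts containing $u,v$ and $w$) forces $uw \in E(H) \Leftrightarrow vw \in E(H)$, contradicting $t_{ik} \neq t_{jk}$.

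Finally, for the $\Delta$-perturbation bound: given $v,v'$ in the same $\mathcal{P}$-part $X$ and any $\mathcal{P}$-part $Y$, both $N_H(v,Y)$ and $N_H(v',Y)$ equal $Y$ (up to removing $v'$ or $v$ in the diagonal $X=Y$ complete case) or $\emptyset$, so $|N_H(v,Y) \triangle N_H(v',Y)| \leq 2$, and it suffices to bound $|N_G(v,Y) \triangle N_H(v,Y)|$. Writing $Y$ as a union of at most $L$ parts $V_j$ of $\mathcal{P}_0$ and applying the $4D_1$-closeness from Lemma \ref{lem: close approximation to complete or empty sets} on each piece gives a bound of $4LD_1$, and $4LD_1 + 2 \leq \Delta$ follows from $\Delta \ge 40L^2 D_1$. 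The main delicate points are the symmetry $t_{ij}=t_{ji}$ (which requires the size bound $T \geq 8D_1$) and the well-definedness of $H$ on the diagonal $X = Y$; non-degeneracy and the perturbation estimate are then routine bookkeeping from the two preceding lemmas.
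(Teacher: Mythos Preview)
Your proof is correct, and takes a genuinely different route from the paper's.

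The paper merges parts \emph{dynamically}: starting from the surviving $V_i$'s, it repeatedly merges two current parts $X,Y$ whenever some $x\in X$, $y\in Y$ satisfy $|N_{G[W]}(x)\triangle N_{G[W]}(y)|\le D_2:=8LD_1$. It then proves by induction on the number of merges that any two vertices in a common part have symmetric difference at most $L(D_1+D_2)\le\DD/4$, applies Lemma~\ref{lem: close approximation to complete or empty sets} only at this stage to obtain the blowup $H$, and verifies non-degeneracy by using the stopping rule (any $x\in X$, $y\in Y$ in distinct final parts have $|N_{G[W]}(x)\triangle N_{G[W]}(y)|>8LD_1$, forcing some $V_k$ to witness a type difference).

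You instead apply Lemma~\ref{lem: close approximation to complete or empty sets} \emph{first} to extract the full type matrix $(t_{ij})$, then merge parts with identical type rows. This is cleaner in two respects: it avoids the inductive diameter bound entirely, and it gives the sharper perturbation estimate $4LD_1+2$ rather than $L(D_1+8LD_1)$. The price is the extra bookkeeping you correctly identified --- the symmetry $t_{ij}=t_{ji}$ and the diagonal consistency $t_{ii}=t_{ij}=t_{jj}$ for equivalent $i,j$ --- neither of which the paper needs. In fact the two constructions produce the \emph{same} partition $\mc{P}$: if $V_i,V_j$ have equal type rows then for every $k$ the symmetric difference inside $V_k$ is at most $8D_1$, hence the total is at most $8LD_1=D_2$ and the paper merges them; conversely a type discrepancy at $k$ forces symmetric difference at least $|V_k|-8D_1>D_2$ inside $V_k$, so the paper keeps them apart.
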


\begin{proof}
We let $R$ be the union of all $V_i$ with $|V_i| \le T$
(so clearly $|R| \le LT$) and let $W = V(G) \sm R$. 
Next we define a partition $\mc{P}$ of $W$ by starting 
with that defined by restricting $(V_1,\dots,V_L)$ 
and repeatedly merging any two parts $X$ and $Y$
if there are some $x \in X$ and $y \in Y$
with $|N_{G[W]}(x) \triangle N_{G[W]}(y)| \leq D_2:=8LD_1$ 
(note that we measure the neighbourhood differences 
here according to $G[W]$ rather than $G$). 
This process terminates with some partition $\mc{P}$
whose parts have size at least $T$ (by definition of $R$),
so that for any distinct parts $X$, $Y$
and $x \in X$, $y \in Y$ we have
$|N_{G[W]}(x) \triangle N_{G[W]}(y)| > D_2$.

We claim that for any part $X$ of $\mc{P}$ 
we have $|N_{G[W]}(x) \triangle N_{G[W]}(x')| 
\leq L(D_1 + D_2) \leq \Delta /4$ for any $x,x'$ in $X$.
To see this, we show by induction on $t \ge 1$ 
that if $X$ is a merger of $t$ of the $V_i$'s
then $|N_{G[W]}(x) \triangle N_{G[W]}(x')| 
\leq tD_1 + (t-1)D_2$ for any $x,x'$ in $X$.
When $t=1$ this holds by our assumptions.
Now suppose $t>1$ and $X$ was obtained 
by merging $X_1$ and $X_2$ with
$|N_{G[W]}(w_1) \triangle N_{G[W]}(w_2)| \leq D_2$
for some $w_i \in X_i$.  
If each $X_i$ is a merger of $t_i$ of the $V_i$'s,
where $t=t_1+t_2$, then by induction hypothesis
$|N_{G[W]}(x_i) \triangle N_{G[W]}(x'_i)| 
\leq t_i D_1 + (t_i-1)D_2$ for any $x_i,x'_i$ in $X_i$.
Then for any  $x,x'$ in $X$ we can bound
$|N_{G[W]}(x) \triangle N_{G[W]}(x')|$ by 
$\big( t_1 D_1 + (t_1-1)D_2 \big)
+ \big( t_2 D_1 + (t_2-1)D_2 \big) + D_2 
= t D_1 + (t-1)D_2$.
This proves the claim.

It follows from Lemma 
\ref{lem: close approximation to complete or empty sets} 
that $G[W]$ is a $\DD$-perturbation 
of some $\mc{P}$-blowup $H$.
To show that $H$ is non-degenerate, we need to show that 
for any distinct parts $X$ and $Y$ of $\mc{P}$
there is some part $Z$ (possibly equal to $X$ or $Y$)
such that one of $H[X,Z]$ and $H[Y,Z]$ 
is complete and the other is empty.

To see this, we fix any $x \in X$ and $y \in Y$,
and note by the merging rule that
$|N_{G[W]}(x) \triangle N_{G[W]}(y)| > D_2 = 8LD_1$,
so there is some part $V_i$ of the original partition with
$|(N_{G[W]}(x) \triangle N_{G[W]}(y)) \cap V_i| > 8D_1$.
We must have $V_i \sub W$, 
so $|V_i| \ge T$ by definition of $R$. By Lemma 
\ref{lem: close approximation to complete or empty sets},
for any $u \in W$ we have $|N_{G[W]}(u) \cap V_i| \le 4D_1$
or $|N_{G[W]}(u) \cap V_i| \ge |V_i| - 4D_1 \ge T-4D_1$.
We deduce that one of $|N_{G[W]}(x) \cap V_i|$
and $|N_{G[W]}(y) \cap V_i|$ is $\le 4D_1$
and the other is $\ge |V_i|-4D_1$,
so they differ by at least $T-8D_1 > 2\DD$.
Let $Z$ be the part of $\mc{P}$ containing $V_i$.
As $G[W]$ is a $\DD$-perturbation of $H$,
we cannot have $H[X,Z]$ and $H[Y,Z]$ 
both complete or both empty. 
Thus $H$ is non-degenerate, as required.
\end{proof}

\subsection{Control graphs}

Our strategy for proving
Theorem \ref{theorem: distinct_degrees_large_homogen}
in the next subsection will be to find
an induced subgraph as in the next definition;
the following lemma shows that this will
indeed have an induced subgraph 
with many distinct degrees.

\begin{dfn} \label{def:control}
We call a graph $F$ a \emph{$k$-control graph} 
if there are partitions $(A,B,C)$ of $V(F)$
and $(C_1,\dots,C_t)$ of $C$,
where $A = \{a_1,\ldots, a_k\}$ 
and each $|C_i | \geq k^2 - 1$, such that
\begin{enumerate}[(i)]
\item given $(i,j) \in [k] \times [t]$ the bipartite graph $F[a_i, C_j]$ 
is either empty or complete, and
\item if $N_F(a_i) \cap C =N_F(a_j) \cap C$ and $i\neq j$ 
then $d_{F[A\cup B]} (a_i) \neq d_{F[A\cup B]} (a_j)$.
\end{enumerate}
\end{dfn}

\begin{lem}
\label{lem: control sets give distinct degrees}
If $F$ is a $k$-control graph then $f(F) \geq k$.
\end{lem}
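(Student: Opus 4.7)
The plan is to use a random choice of subset $S \subseteq C$ and show that the induced subgraph $F[A \cup B \cup S]$ already exhibits $k$ distinct degrees among the vertices of $A$. Let $c_i := d_{F[A\cup B]}(a_i)$ and $J_i := \{ j \in [t] : F[a_i,C_j] \text{ is complete}\}$; by condition (i) every pair $(a_i,C_j)$ behaves uniformly. For any $S \subseteq C$ obtained by choosing $x_j := |S \cap C_j|$ elements from each $C_j$, we have
\[
d_{F[A \cup B \cup S]}(a_i) \;=\; c_i + \sum_{j \in J_i} x_j \;=:\; y_i.
\]
So it suffices to pick $x_1,\dots,x_t$ with $0 \le x_j \le |C_j|$ such that $y_1,\dots,y_k$ are pairwise distinct.

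Since each $|C_j| \geq k^2 - 1$ we may sample each $x_j$ independently and uniformly from $\{0,1,\dots,k^2-1\}$. For any distinct $i,i' \in [k]$ there are two cases. If $J_i = J_{i'}$, then $N_F(a_i) \cap C = N_F(a_{i'}) \cap C$, so by condition (ii) we have $c_i \ne c_{i'}$, and $y_i - y_{i'} = c_i - c_{i'} \neq 0$ deterministically. Otherwise $J_i \triangle J_{i'} \neq \emptyset$; fix any $j_0$ in this symmetric difference and condition on all other $x_j$. Then $y_i - y_{i'}$ depends linearly on $x_{j_0}$ with coefficient $\pm 1$, so it takes $k^2$ distinct values as $x_{j_0}$ varies, giving $\Pr(y_i = y_{i'}) \le 1/k^2$.

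Taking a union bound over all $\binom{k}{2} < k^2/2$ pairs, the probability that some collision occurs is strictly less than $1$, so some choice of $(x_j)$ produces pairwise distinct $y_1,\dots,y_k$. Fix such a choice and any corresponding $S \subseteq C$; then $a_1,\dots,a_k$ have $k$ distinct degrees in $F[A \cup B \cup S]$, whence $f(F) \ge k$.

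The only mild subtlety is condition (ii): without it, two indices $i \neq i'$ with identical neighbourhoods in $C$ would produce identical linear forms in $(x_j)$ and could never be separated. The bound $|C_j| \ge k^2 - 1$ is exactly what is needed to beat the $\binom{k}{2}$ union bound; nothing else in the argument requires delicate estimates, so I expect no real obstacle beyond setting up the dictionary between induced-subgraph degrees and the integer-valued linear forms $y_i$.
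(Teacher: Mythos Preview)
Your proof is correct and follows essentially the same approach as the paper: both arguments randomly and independently choose the number $x_j$ of vertices to retain in each $C_j$, observe that collisions between $a_i$ and $a_{i'}$ are either impossible (when $J_i=J_{i'}$, by condition (ii)) or occur with probability at most $1/k^2$ (by fixing all but one coordinate in $J_i\triangle J_{i'}$), and conclude by a union bound over the $\binom{k}{2}$ pairs. The only cosmetic difference is that the paper samples $x_j$ uniformly from $\{0,\dots,|C_j|\}$ rather than $\{0,\dots,k^2-1\}$.
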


\begin{proof}
With notation as in Definition \ref{def:control},
we randomly select integers $m_i \in [0,|C_i|]$
uniformly and independently for each $i \in [t]$,
fix $C_i' \subset C_i$ with each $|C_i'| = m_i$
and consider the induced subgraph
$F' = F[A \cup B \cup C']$ 
with $C' = \bigcup_{i \in [t]} C'_i$.
We will show that with positive probability,
the vertices in $A$ have distinct degree in $F'$,
and so $f(F) \geq f(F') \geq k$.
	
Consider any distinct $a,a'$ in $A$.	
If $N_F(a) \cap C =N_F(a') \cap C$ 
then by property (ii) we have $d_{F'}(a) \ne d_{F'}(a')$
regardless of the choice of $C_1',\ldots, C_t'$.
On the other hand, if $N_F(a) \cap C \ne N_F(a') \cap C$ 
then there is some $C_i$ such that (say) 
$C_i \sub N_F(a)$ and $C_i \cap N_F(a') = \es$.
Conditional on any choices of $\{C_j'\}_{j\neq i}$,
there is at most one choice of $m_i$ that gives
$d_{F'}(a)=d_{F'}(a')$, which occurs with probability
$(|C_i|+1)^{-1} \leq k^{-2}$. We deduce
$\mb{P}(f(F')<k) \le \tbinom{k}{2} k^{-2} < 1/2$,
so the lemma follows.
\end{proof}

In the proof of 
Theorem \ref{theorem: distinct_degrees_large_homogen}
we will obtain control graphs in each set of the partition from Lemma
\ref{lem: coarse robust structure} using the following lemma, and combine these to form a $k$-control graph.

\begin{lem} \label{lem: special case}
Let $\DD, k , n, N \in \mb{N}$ with
$n \geq 4k\DD$ and $N > (k-1)(n-1)$.

Suppose $G$ is an $N$-vertex graph
with independence number $\alpha (G) < n$
and a partition $V(G) = W \cup U$
with $|U| \leq n/2$ and
$|N_G(v) \cap W| \leq \DD$ for all $v\in V(G)$. 

Then $G$ contains a $k$-control graph with 
vertex partition $(A= \{a_1,\ldots, a_k\},B,C)$,
where $G[A]$, $G[A,C]$ are empty
and $|C| \ge |W| - k^2\DD$,
and $B$ has a partition $(B_1,\dots,B_k)$ 
with each $|B_i| = i-1$ so that 
each $G[\{a_i\}, B_j]$ 
is complete if $i=j$ or empty if $i\neq j$.
\end{lem}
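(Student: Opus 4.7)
The plan is to construct the required $k$-control graph inside $G$ by a greedy procedure. I choose $A=\{a_1,\dots,a_k\}$ as an independent set in $G$ and the $B_i$'s as pairwise-disjoint ``private neighbourhoods'' of the $a_i$'s with the prescribed sizes, and finally take $C$ to be most of $W$. The hypotheses $\alpha(G)<n$ and $N>(k-1)(n-1)$ supply enough vertices and edges, while $n\geq 4k\Delta$ gives the quantitative slack needed for iteration.

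First I fix a large independent set: let $I$ be a maximum independent set in $G[W]$. The degree bound $|N_G(v)\cap W|\leq\Delta$ yields $|I|\geq |W|/(\Delta+1)$, and $\alpha(G)<n$ gives $|I|\leq n-1$. Write $T=V(G)\setminus I$. By maximality, every $v\in W\setminus I$ has at least one neighbour in $I$, so $|E_G(I,T)|\geq |W|-|I|$. Combined with $|W|>(k-1)(n-1)-n/2$, $|I|\leq n-1$, and $n\geq 4k\Delta$, this yields $|E_G(I,T)|>(k-2)|I|$, so by integrality some $s\in I$ has $|N_G(s)\cap T|\geq k-1$, which will serve as $a_k$.

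I then build the control graph by iterating in reverse order $i=k,k-1,\dots,1$. Having already chosen $(a_{i+1},B_{i+1}),\dots,(a_k,B_k)$, put $A^{>i}=\{a_{i+1},\dots,a_k\}$ and $B^{>i}=\bigcup_{j>i}B_j$ and define
\[
I^{(i)} := I\setminus\bigl(A^{>i}\cup N_G(B^{>i})\bigr),\qquad T^{(i)} := T\setminus\bigl(B^{>i}\cup N_G(A^{>i})\bigr).
\]
I pick $a_i\in I^{(i)}$ with $|N_G(a_i)\cap T^{(i)}|\geq i-1$ and set $B_i$ to be any $(i-1)$-subset of $N_G(a_i)\cap T^{(i)}$. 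Existence of such $a_i$ is shown by a double-counting argument on $|E_G(I^{(i)},T^{(i)})|$, using the loss bounds $|I\setminus I^{(i)}|\leq (k-i)+\Delta\binom{k}{2}$ and $|T\setminus T^{(i)}|\leq (k-i)(\Delta+n/2)$, which are controlled via $|N_G(v)\cap W|\leq\Delta$. By construction, $a_i$ is non-adjacent to $A^{>i}\cup B^{>i}$ and $B_i$ is non-adjacent to $A^{>i}$ and disjoint from $B^{>i}$; the constraint that later-chosen $a_j$ ($j<i$) be non-adjacent to $B_i$ is enforced at those later stages by the definition of $I^{(j)}$.

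Finally, set $C=W\setminus(A\cup B\cup N_G(A))$. Summing $|A\cap W|\leq k$, $|B\cap W|\leq\binom{k}{2}$, and $|N_G(A)\cap W|\leq k\Delta$ gives $|C|\geq |W|-k\Delta-k-\binom{k}{2}\geq |W|-k^2\Delta$. The induced subgraph $F=G[A\cup B\cup C]$ with the trivial one-part partition $(C)$ of $C$ is then a $k$-control graph: condition (i) of Definition~\ref{def:control} is automatic since $G[A,C]=\emptyset$, and condition (ii) is vacuous because the $a_i$ have pairwise distinct degrees $0,1,\dots,k-1$ in $F[A\cup B]$. The main obstacle is the inductive averaging: as $i$ decreases, the losses in $|T|$ (each term $|N_G(a_j)\cap T|$ can be as large as $\Delta+n/2$, driven by $U$-neighbours) threaten to overwhelm the edge surplus $|E_G(I,T)|>(k-2)(n-1)$; balancing these error terms across all $k$ iterations is precisely what the bound $n\geq 4k\Delta$ (together with $|I|\leq n-1$) allows.
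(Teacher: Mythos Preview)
Your argument has a genuine gap at the very first step: the claimed inequality $|E_G(I,T)|>(k-2)|I|$ does not follow from the ingredients you list. From $|E_G(I,T)|\ge |W|-|I|$ together with $|W|>(k-1)(n-1)-n/2$ and $|I|\le n-1$ you only get $|E_G(I,T)|>(k-2)(n-1)-n/2$, which can fall short of $(k-2)|I|$ by as much as $n/2$; the hypothesis $n\ge 4k\Delta$ does not close this gap. Concretely, take $k=3$, $\Delta=5$, $n=100$, $N=199$, $|U|=50$, $|W|=149$. Let $G[U]$ be a single clique with no edges to $W$, and let $G[W]$ be the disjoint union of a matching on $138$ vertices and $11$ isolated vertices. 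Then $|N_G(v)\cap W|\le 1\le\Delta$ for every $v$, and $\alpha(G)=80+1=81<n$, so all hypotheses hold. Any maximum independent set $I$ in $G[W]$ has $|I|=80$, and \emph{every} vertex of $I$ has degree at most $1$ in $G$. Thus no $a_3\in I$ has two neighbours in $T$, and your construction cannot even begin. The difficulty you flag later (``losses in $|T|$ driven by $U$-neighbours'') is real but secondary; the scheme already fails before any losses are incurred.

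The paper's proof sidesteps this by not restricting $a_k$ to a prefixed independent set in $W$. It applies Tur\'an's theorem to all of $G$ to obtain some $a_k$ of degree at least $k-1$ (in the example above $a_k$ necessarily lies in $U$), takes $B_k\subset N_G(a_k)$ of size $k-1$, and then deletes $U$ together with $a_k$ and $N_G(B_k\cup\{a_k\})$; this removes at most $n-1$ vertices, so one may recurse with $k-1$ on a graph contained in $W$, where the bound $|N_G(v)\cap W|\le\Delta$ now controls all further deletions. The asymmetric handling of $U$---spending it entirely at the top level---is precisely the idea your approach is missing. If you wish to salvage your scheme, one option is to take $I$ maximum in $G$ rather than in $G[W]$ (so that every $v\notin I$ sends an edge to $I$ and the step-$k$ averaging works), but then you must separately account for $I\cap U$ when bounding $|I\setminus I^{(i)}|$; alternatively, simply follow the paper and allow $a_k$ and $B_k$ to use $U$ before discarding it.
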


\begin{proof}
If $k = 1$ then the result is clear, 
taking $a_1$ to be any vertex from $V(G)$,  
$B_1 = \emptyset $ and $C = W \setminus N_G(a_1)$.
For $k>1$ we argue by induction.
By Tur\'an's theorem, $G$ contains a vertex $a \in V(G)$ 
with degree $\Delta (G) \geq k-1$. 
Let $a_k = a$ and $B_k \sub N_G(a)$ with $|B_k|=k-1$.
Let $G'$ be obtained from $G$ by deleting
$U$, $a$ and $N_G(B_k \cup \{a\})$.
We delete at most
$1 + \Delta + (k -1)(\Delta -1) + |U| 
\leq k\Delta + n/2 \leq n-1$ vertices,
so $|V(G')| \geq N - (n-1) > (k-2)(n -1)$. 
By induction $G'$ contains
$(A'= \{a_1,\ldots, a_{k-1}\},B',C)$,
where $G[A']$, $G[A',C]$ are empty
and $|C| \ge (|W|-k\DD) - (k-1)^2\DD
\ge |W| - k^2\DD$,
and $B'$ has a partition $(B_1,\dots,B_{k-1})$ 
with each $|B_i| = i-1$ so that 
each $G[\{a_i\}, B_j]$ 
is complete if $i=j$ or empty if $i\neq j$. 
We obtain $A$, $B$ from $A'$, $B'$
by adding $a_k$, $B_k$; then $(A,B,C)$
is as required, as there are no edges 
between $B_k \cup \{a_k\}$ and $V(G')$.
\end{proof}

\begin{rem} \label{rem:simplify}
The following simplified consequence 
of Lemma \ref{lem: special case}
will often be convenient to apply.
Let $G$ be an $N$-vertex graph $G$ with $\hom(G)<n$
that is a $\DD$-perturbation of a one-part blowup
(i.e.\ a complete or empty graph).
Suppose $k = \phi(N) 
:= \lceil \tfrac{N}{n-1} \rceil \le n/4\DD$
and $N > k^2 \DD + K$ with $K \ge k^2$.
Then $G$ has a $k$-control graph
with partition $(A,B,C)$ where $|C|=K$.
\end{rem}

\subsection{Proof of Theorem \ref{theorem: distinct_degrees_large_homogen}}

To begin, we fix parameters, for reference during the proof. Set 
		\begin{align*}
			D_1= 2^{11}k^2; \quad \Delta = 2^{25}k^4; 
			\quad \Delta _1 = 2^{5}\Delta k; \quad 
			T = 2^4\Delta _1 k^2; \quad 
			n_0 = 2^9\Delta _1 k^4 = 2^{45}k^9.
		\end{align*}
	
Let $G$ be an $N$-vertex graph 
where $N = (k-1)(n-1) + 1$ and $n \geq n_0$.
We suppose for a contradiction
that $\hom (G) < n$ and $f(G) < k$.
Lemma \ref{lem: coarse structure} gives a partition 
$V(G) = V_1 \cup \cdots \cup V_L$ with $L \leq 4k$ such that 
$|N_G(u) \triangle N_G(u')| \leq  D_1$ for all $u, u' \in V_i$.

Lemma \ref{lem: coarse robust structure} then gives
partitions $(W,R)$ of $V(G)$
and $\mc{P}=(W_1,\dots,W_M)$ of $W$ such that $|R| \le LT$, 
each part of $\mc{P}$ has size at least $T$,
and $G[W]$ is a $\DD$-perturbation 
of a non-degenerate $\mc{P}$-blowup $H$.

Our aim is to find a $k$-control graph, which by Lemma 
\ref{lem: control sets give distinct degrees}
will give the required contradiction that $f(G) \ge k$. 
This control graph will have partition $(A,B,C)$
obtained by combining $k_i$-control graphs
on vertex set $E_i \sub W_i$ with partitions $(A_i,B_i,C_i)$
for each $i \in [M]$, where $\sum_i k_i = k$
and each $G[E_i,E_{i'}]$
with $i \ne i'$ is complete or empty according to $H$.
We may also need an additional $k_0$-control
graph with partition $(a_0,\es,C_0)$ 
where $k_0=1$ and $a_0 \in R$.
We will ensure that all parts $C^j_i$ 
of each $C_i$ have size at least $k^2-1$,
and the non-degeneracy of $H$ will guarantee that vertices
in distinct $A_i$'s have distinct neighbourhoods in $C$,
so this construction will indeed give 
a control graph on $(A,B,C)$.

Next we will describe an algorithm that finds 
a $k$-control graph in some cases; we will later
show how it can be modified to cover the remaining cases.

\medskip

{\bf Algorithm.}
We proceed in $M$ rounds numbered by $i \in [M]$.
At the start of round $i$ we have sets 
$W^i_j \sub W_j$ for each $j \in [M]$,
where each $W^1_j=W_j$ and we will obtain each
$W^{i+1}_j$ from $W^i_j$ by deleting 
at most $2k^2 \DD$ vertices.
As $G[W]$ is a $\DD$-perturbation of $H$, 
and $|W^i_i| \ge |W_i|-2Mk^2\DD > 2k^2\DD$,
we can apply Remark \ref{rem:simplify} 
to $G[W^i_i]$ with $K=k^2$,
thus obtaining a $k_i$-control graph 
on a set $E_i$ with partition $(A_i,B_i,C_i)$
where $|C_i| = k^2$ and $k_i = \phi(|W^i_i|) = 
\lceil \tfrac{|W^i_i|}{n-1} \rceil \le n/4\DD$.
As $G[W]$ is a $\DD$-perturbation of $H$, 
for each $j>i$ we can remove $|E_i|\DD \le 2k^2 \DD$ 
vertices from $W^i_j$ to obtain $W^{i+1}_j$
such that $G[E_i, W^{i+1}_j]$ is complete 
or empty according to $H[W_i,W_j]$.
After all rounds are complete we obtain a
$k'$-control graph with parts $(A,B,C)$
where $A = \cup A_i$, $B = \cup B_i$,
$C = \cup C_i$ and $k'=\sum k_i$.

\medskip
 
Now we consider what conditions guarantee $k'=k$ 
in the algorithm. To analyse this,
we associate vertices of $R$ with parts $W_i$ 
according to any neighbourhood similarity.
Specifically, we fix vertices $w_i \in W_i$ 
for each $i\in [M]$ and let
$$U_i := \big \{v \in R: |N_G(v,W) \triangle N_G(w_i, W)| 
 	\leq \Delta _1 \big \}.$$
We start by considering the case that 
$\cup _{i\in [M]} U_i = R$.

As $\phi$ is superadditive, we have  	
$\sum _{i\in [M]} \phi (|W_i \cup U_i|) 
\geq \phi (N) = k$. If we have
\[ \phi (|W_i \cup U_i|) 
= \phi \big (|W_i| - {4M \Delta _1 k^2}\big ) \]
for all $i$ then we deduce
\[ |A| = \sum _{i\in [M]} \phi (|W^i_i|)
 \ge \sum _{i\in [M]} 
 \phi \big (|W_i| - 4M\Delta _1k^2\big ) = 
 \sum _{i\in [M]}\phi \big (|W_i \cup U_i| \big ) \ge k.\]
Thus we can assume (possibly by relabelling) that
\[ \phi (|W_1 \cup U_1|) 
> \phi \big (|W_1| - {4M \Delta _1 k^2}\big ). \]
If $|W_1|<n/2$ we estimate
\[ |A| \ge 1 + \sum_{i\in [2,M]} 
 \phi \big (|W_i| - 4M\Delta _1k^2\big ) 
 \ge 1 + \phi(N-|R|-|W_1|)
 \ge 1 + \phi(N-(n-1)) = k.\]
Thus we can assume $|W_1| \ge n/2$.

To complete the analysis of this case,
we modify round 1 of the algorithm by 
setting $W^1_1$ equal to 
$W_1 \cup U_1$ rather than $W_1$.
By definition of $U_1$ we can apply
Lemma \ref{lem: special case} 
to either $G[W^1_1]$ or ${\ov G}[W_{1}^{1}]$,
now with $W=W_{1}$, $U = U_1$
and $2\DD_1$ in place of $\DD$,
which is valid as
$|U_1| \leq |R| \leq 4kT \leq n/2$,
and $|W^1_1| - (2\Delta _1)k^2 
\geq n/2 - (2 \Delta _1)k^2 \geq k^2$
as $n \geq 5\Delta_1k^2$.
Thus in round 1 we find a $k_1$-control graph
with $k_1 = \phi(|W_1 \cup U_1|)
> \phi \big (|W_1| - {4M \Delta _1 k^2}\big )$.
The remainder of the algorithm is the same.
Now we estimate
\[ |A| = \sum _{i\in [M]} \phi (|W^i_i|)
 \ge 1 + \sum _{i\in [M]} 
 \phi \big (|W_i| - 4M\Delta _1k^2\big ) 
 \ge 1 + \phi(N-|R|-4M^2\Delta _1k^2) \ge k.\]

It remains to consider the case
$\cup _{i\in [M]} U_i \ne R$.
Here before applying the algorithm 
we first fix $a_0 \in R \sm \big ( \cup _{i\in [M]} U_i \big )$
and choose an extra $1$-control graph $(a_0,\es,C_0)$ as follows. 
For each $i \in [M]$, by definition of $U_i$ we have
\begin{equation*}
 |N_G(a_0,W) \triangle N_G(w_i, W)| 
 > \Delta _1 = 32\Delta k \geq 4M(k^2+\Delta).
\end{equation*}
Thus we can greedily choose disjoint 
sets $C_{1,0},\ldots, C_{M,0}$ so that
each $C_{i,0}$ has size $k^2$,
is contained in some $W_{j(i)}$,
and is contained in 
$N_G(a_0,W) \setminus N_H(w_i, W)$ or 
$N_H(w_i, W) \setminus N_G(a_0,W)$
(recall that $G[W]$ is a 
$\Delta $-perturbation of $H$).
We let $C_0 = \cup _{i\in [M]} C_{i,0}$.
Then we apply the algorithm as before,
except that we now let $W_{i,1}$ be the set of
$w \in W_i\sm C_0$ with $N_G(w, C_0) = N_H(w,C_0)$,
noting that $|W_{i,1}| \geq |W_i| - (\Delta +1)|C_0|
 \geq |W_i| - 2\Delta Mk^2$, as $G[W]$ is a 
$\Delta $-perturbation of $H$.
We still obtain a control graph,
as the neighbourhood of $a_0$ differs 
from the neighbourhoods of all vertices in $E_i$ on $C_{i,0}$.
Furthermore, $|A| = |\{a_0\}| 	+ \sum _{i\in [M]}|A_i | \geq k$.
This completes the proof. \hfill \qed

\section{Concluding remarks}

This paper was concerned with the minimum possible 
value of $f(G)$ in two regimes for $\hom(G)$.
For Ramsey graphs, i.e.\ $\hom(G) = O(\log N)$,
in Theorem \ref{theorem: D_D_Ramsey}
we showed $f(G) = \OO(N^{2/3})$, 
which gives the correct order of magnitude 
(as shown by a random graph); it would be interesting 
(but no doubt very difficult) to obtain an asymptotic result.

At the other extreme, when $\hom(G)$ is large
we have obtained an exact result,
thus proving a conjecture of Narayanan and Tomon \cite{NT}.
This also makes progress on another of their
conjectures that $\hom (G) \geq N^{1/2}$ guarantees 
$f(G) = \Omega \big ( \frac {N}{\hom (G)} \big )$; indeed,
Theorem \ref{theorem: distinct_degrees_large_homogen} proves this 
in a strong form provided $\hom (G) \geq \Omega (N^{9/10})$. 
The exponent here can be reduced by taking more care with the
exceptional set $R$ in the proof, but it seems that new ideas
are needed to reduce the exponent to $1/2$.

Finally, it would be particularly interesting 
to determine the minimum order of magnitude of $f(G)$
in the intermediate range of $\hom(G)$.

\end{document}